\newcommand{\R}{\mathbb R}
\newcommand{\C}{\mathcal C}
\newcommand{\D}{\mathcal{D}}
\newtheorem{theorem}{Theorem}[section]
\newtheorem{lemma}[theorem]{Lemma}
\newtheorem{cor}[theorem]{Corollary}
\newtheorem{prop}[theorem]{Proposition}
\newtheorem{remark}[theorem]{Remark}
\theoremstyle{definition}
\DeclareMathOperator*{\cont}{Cont}
\DeclareMathOperator*{\diff}{Diff}
\let\phi=\varphi
\newcommand{\rem}[1]{}
\DeclareFontFamily{U}{mathb}{\hyphenchar\font45}
\DeclareFontShape{U}{mathb}{m}{n}{
<-6> mathb5 <6-7> mathb6 <7-8> mathb7
<8-9> mathb8 <9-10> mathb9
<10-12> mathb10 <12-> mathb12
}{}
\DeclareSymbolFont{mathb}{U}{mathb}{m}{n}
\DeclareMathSymbol{\llcurly}{\mathrel}{mathb}{"CE}
\DeclareMathSymbol{\ggcurly}{\mathrel}{mathb}{"CF}
\newcommand{\id}{\mathrm{id}}
\title[Positive paths in Diffeomorphism groups]{Positive paths in diffeomorphism groups of manifolds with a contact distribution}
\subjclass{53D10, 57S05}
\author{Jakob Hedicke}
\address{ Radboud Universiteit Nijmegen, Heyendaalseweg 135, 6525 AJ NIJMEGEN, The Netherlands} 
\email{jakob.hedicke@gmail.com}
\date{\today}
\begin{document}

\begin{abstract}
Given a cooriented contact manifold $(M,\xi)$, it is possible to define a notion of positivity on the group $\mathrm{Diff}(M)$ of diffeomorphisms of $M$, by looking at paths of diffeomorphisms that are positively transverse to the contact distribution $\xi$.
We show that, in contrast to the analogous notion usually considered on the group of diffeomorphisms preserving $\xi$, positivity on $\mathrm{Diff}(M)$ is completely flexible.
In particular, we show that for the standard contact structure on $\R^{2n+1}$ any two diffeomorphisms are connected by a positive path.
This result generalizes to compactly supported diffeomorphisms on a large class of contact manifolds.
As an application we answer a question about Legendrians in thermodynamic phase space posed by Entov, Polterovich and Ryzhik in the context of thermodynamic processes.
\end{abstract} 

\maketitle

\section{Introduction}

Let $(M,\xi)$ be a (not necessarily closed) cooriented contact manifold of dimension $2n+1$ for $n\geq 1$.
This means that the hyperplane distribution $\xi\subset TM$ is globally the kernel of a $1$-form $\alpha$ such that $\alpha\wedge (d\alpha)^n$ is a volume form on $M$.

There are two natural groups associated with $M$: the group of diffeomorphisms $\diff(M)$ and the group of contactomorphisms $\cont(M,\xi)$, i.e., the subgroup of diffeomorphisms preserving the contact structure $\xi$.
In this paper we will mostly consider the following subgroups of $\diff(M)$: the group $\diff_0(M)$ of diffeomorphisms isotopic to the identity, the group $\diff_c(M)$ of compactly supported diffeomorphisms isotopic to the identity via compactly supported diffeomorphisms and their contact analogues $\cont_0(M,\xi)$ and $\cont_c(M,\xi)$.

A fundamental goal in contact topology is to study rigidity phenomena for contactomorphisms, or in other words to understand the differences between the groups $\diff(M)$ and $\cont(M,\xi)$.

In this short note we consider a notion of positivity on $\diff(M)$ induced by the contact structure, analogously to the notion considered in contact topology \cite{Eliashberg00}.
While in contact geometry this notion sometimes gives rise to a partial order, we show that all diffeomorphism groups are non-orderable.
More precisely, the relation induced by positivity is trivial for compactly supported diffeomorphisms, and there exist $C^{\infty}$-small positive loops of diffeomorphisms.
This is a big contrast to the group of contactomorphisms, where even in the non-orderable case small positive loops do not exist \cite{Casals162, Albers17}.
Our main tool is a Chow-Rashevsky Theorem for diffeomorphism groups, proved in \cite{Agrachev09}. 

For the case of Legendrians we use these results to answer a question posed in \cite{Entov25} in the context of thermodynamic processes.

\subsection{Positivity for contactomorphisms and diffeomorphisms}

In \cite{Eliashberg00} the authors introduce the notion of a positive path of contactomorphisms (see also \cite{Bhupal01}) as follows. 
Any smooth path $(\phi_t)_{t\in I}$ in $\cont(M,\xi)$ defines a time-dependent contact vector field $(X_t^{\phi})_{t\in I}$ on $M$.
A path is called \textbf{positive (non-negative)} if $\alpha \left(X_t^{\phi}\right)>0$ ($\geq 0$) for all $t$, where $\alpha$ is any contact form defining the chosen coorientation of $\xi$.
The notion of positivity can be used to define a bi-invariant relation on $\cont(M,\xi)$ (and its universal cover $\widetilde{\cont(M,\xi)}$), see e.g. \cite{Eliashberg00}.
For closed contact manifolds this relation turns out to be a partial order if and only if there are no positive (contractible) loops of contactomorphisms \cite{Eliashberg00}.

The notion of positivity has turned out to be connected to several other known phenomena in contact geometry, such as for example the existence of closed Reeb orbits \cite{Albers15}, contact (non-)squeezing \cite{Eliashberg06, Albers18}, the existence of invariant norms \cite{Colin15, Fraser18}, translated points \cite{Sandon12, Albers15, Arlove25, Hedicke24}, the existence of quasimorphisms \cite{Givental,Granja}, (non-) displaceability of pre-Lagrangians \cite{Borman152, Marincovic16}, loose Legendrians \cite{Liu20} or supporting open book decompositions \cite{Hedicke24}.

In principle the same notion of positivity can be defined for paths in $\diff(M)$, see \cite[Remark 5.4]{Entov25}.
Since any path $(f_t)_{t\in I}$ defines a time-dependent vector field $(X_t^f)_{t\in I}$ on $M$, we can analogously define a path of diffeomorphisms to be positive (non-negative) if $\alpha \left(X_t^{f}\right)>0$ ($\geq 0$) for all $t$.
Note however that this notion of positivity is in general not invariant under conjugation.
Another major difference to the contact case is that there exist non-constant paths of diffeomorphisms with $\alpha \left(X_t^{f}\right)=0$.
We call a path $(f_t)_{t\in I}$ with this property a \textbf{null path}.

In this paper we show that the notion of positivity is trivial on the identity component of the diffeomorphism group $\diff_0(M)$ in the sense of the following theorems.

\begin{theorem}\label{thm:Rn:all}
Consider $\R^{2n+1}$ equipped with its standard contact structure.
Every $f\in \diff_0(\R^{2n+1})$ is connected to $\mathrm{id}$ by a null path and a positive path of diffeomorphisms.
If $f\in\diff_c(\R^{2n+1})$, it is connected to $\mathrm{id}$ by a compactly supported null path and a positive path that is arbitrarily close to $\mathrm{id}$ in the $C^{\infty}$-compact open topology outside of a compact set.
\end{theorem}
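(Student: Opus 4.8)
The plan is to deduce the whole statement from the Chow--Rashevsky theorem for diffeomorphism groups of \cite{Agrachev09}, using only the two elementary facts that a contact distribution is bracket-generating and that the Reeb flow preserves the contact form. Throughout write $\alpha=dz-\sum_i y_i\,dx_i$ for the standard contact form on $\R^{2n+1}$, $\xi=\ker\alpha$, and $R=\partial_z$ for the Reeb vector field. A path $(f_t)$ is a null path exactly when its generating vector field $X^f_t$ is at each time a section of $\xi$, so null paths starting at $\id$ are precisely the horizontal paths for the family $\mathcal F$ of vector fields tangent to $\xi$. Since $\alpha([Y,Y'])=-d\alpha(Y,Y')$ for $Y,Y'\in\mathcal F$ and $d\alpha|_\xi$ is non-degenerate, $\mathcal F$ is bracket-generating (of step $2$), and hence the Chow--Rashevsky theorem of \cite{Agrachev09} shows that the set of diffeomorphisms reachable from $\id$ along a horizontal path of $\mathcal F$ is the full identity component, and in the compactly supported setting is the full identity component $\diff_c(\R^{2n+1})_0$. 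For a general $f\in\diff_0(\R^{2n+1})$ one would first fragment $f$ into a locally finite product of diffeomorphisms supported in bounded sets, apply the compactly supported version to each factor, and concatenate the resulting null paths after a time reparametrisation; the concatenation is again a null path because each of its pieces has generator in $\xi$. This proves both null-path assertions (the compactly supported one being immediate from the compactly supported version).

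To turn a null path into a positive one, let $f\in\diff_0(\R^{2n+1})$, fix $T>0$, and set $f':=\phi_R^{-T}\circ f\in\diff_0(\R^{2n+1})$. Choosing a null path $(g_t)_{t\in[0,1]}$ from $\id$ to $f'$ with generator $Y_t$ a section of $\xi$, and a smooth $c\colon[0,1]\to[0,T]$ with $c(0)=0$, $c(1)=T$, $\dot c>0$, I would set $h_t:=\phi_R^{c(t)}\circ g_t$. A direct computation gives that the generating vector field of $(h_t)$ is
\[
Z_t \;=\; \dot c(t)\,R \;+\; \bigl(\phi_R^{c(t)}\bigr)_{*}Y_t .
\]
Because the Reeb flow preserves $\alpha$, it preserves $\xi$, so $(\phi_R^{c(t)})_{*}Y_t$ is again a section of $\xi$ and therefore $\alpha(Z_t)=\dot c(t)>0$; thus $(h_t)$ is a positive path, and its endpoints are $h_0=\id$ and $h_1=\phi_R^{T}\circ f'=f$. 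This gives the positive-path assertion for all of $\diff_0(\R^{2n+1})$.

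For the compactly supported refinement the same construction has to be made quantitative. By the fragmentation lemma it suffices to produce, for $f$ supported in a ball $B$ and any $\varepsilon>0$, a positive path from $\id$ to $f$ that is $\varepsilon$-close to $\id$ in $C^\infty_{\mathrm{loc}}$ outside a slightly larger ball, and then to concatenate such paths. By the first step there is a compactly supported null path $(m_t)$ from $\id$ to $f$. Using $[\partial_{y_1},\,\partial_{x_1}+y_1\partial_z]=\partial_z=R$ one checks the exact identity of globally defined affine diffeomorphisms
\[
\phi_{\partial_{y_1}}^{\sqrt T}\circ\phi_{\partial_{x_1}+y_1\partial_z}^{\sqrt T}\circ\phi_{\partial_{y_1}}^{-\sqrt T}\circ\phi_{\partial_{x_1}+y_1\partial_z}^{-\sqrt T}\;=\;\phi_R^{-T},
\]
so concatenating the four one-parameter families appearing on the left gives a null path $(n_t)$ from $\id$ to $\phi_R^{-T}$, with generators among $\pm\partial_{y_1},\pm(\partial_{x_1}+y_1\partial_z)$, which is $O(\sqrt T)$-close to $\id$ on any fixed compact set. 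Then $(g_t)$, the concatenation of $(m_t)$ with $(n_t\circ f)$, is a null path from $\id$ to $\phi_R^{-T}\circ f$ that is $O(\sqrt T)$-close to $\id$ outside $B$, and $h_t:=\phi_R^{tT}\circ g_t$ is, by the computation above, a positive path from $\id$ to $f$ that is $O(\sqrt T)$-close to $\id$ outside $B$; taking $T$ small finishes the single-ball case.

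The step I expect to be the main obstacle is, on the one hand, simply importing the Chow--Rashevsky theorem of \cite{Agrachev09} in the precise form needed (attainability of the whole identity component, with the compactly supported variant), which is the only non-elementary input. On the other hand, the delicate point in the proof itself is the quantitative statement: a positive path can never be compactly supported, so near infinity the positive path is forced to be a small non-trivial loop of diffeomorphisms, and it is precisely the explicit commutator identity above, together with the bookkeeping of supports in the fragmentation and concatenation, that keeps this behaviour under control.
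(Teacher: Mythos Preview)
Your positive-path construction (left-composing a null path with the Reeb flow) and your commutator trick for producing an explicit null path to $\phi_R^{-T}$ that is $O(\sqrt{T})$-small in $C^\infty_{\mathrm{loc}}$ are essentially the same devices the paper uses, and these parts are fine.

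The genuine gap is in the null-path assertion for \emph{non-compactly supported} $f\in\diff_0(\R^{2n+1})$. The theorem of \cite{Agrachev09} you invoke is proved only for closed manifolds, and even the compactly supported variant does not come for free: a priori the null path produced on a compactification need not stay compactly supported in $\R^{2n+1}$. More seriously, your reduction of the general case via ``fragment $f$ into a locally finite product of diffeomorphisms supported in bounded sets and concatenate'' does not work. A general element of the Whitney-$C^\infty$ connected component $\diff_0(\R^{2n+1})$ is \emph{not} a finite product of compactly supported diffeomorphisms (the path-connected component of $\id$ in this topology is contained in $\mathcal D_c$, so the usual fragmentation machinery, which goes through isotopies, is unavailable), and a locally finite infinite product forces an infinite concatenation of null paths with no control on their size; there is no reason for such a concatenation to extend smoothly to the endpoint $t=1$ as a map $[0,1]\times\R^{2n+1}\to\R^{2n+1}$.

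The paper avoids this entirely by a different mechanism. It proves directly (Proposition~\ref{prop:Rn}) that every diffeomorphism in a Whitney-$C^\infty$ neighbourhood $\mathcal V$ of $\id$ --- compactly supported or not --- is connected to $\id$ by a null path, by writing $f$ explicitly as a composition of reparametrised flows of globally defined vector fields $X,Y,Z$ spanning $T\R^{2n+1}$ with $X,Y\in\xi_0$ and $Z=(\phi^X_\epsilon)_*Y$. The point is that on $\R^{2n+1}$ the contact distribution admits a global frame, so the local Agrachev--Caponigro argument upgrades to a statement about an open neighbourhood of $\id$ in the Whitney topology; the inverse-function step is handled by the $C^1$-openness of diffeomorphisms. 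Then one uses only that a connected topological group is generated (as a group) by any open neighbourhood of the identity, so every $f\in\diff_0(\R^{2n+1})$ is a \emph{finite} product $f_1\circ\cdots\circ f_k$ with $f_i\in\mathcal V$, and one concatenates the finitely many null paths. The same explicit formulas show that the null path is compactly supported when $f$ is, and give the quantitative control near infinity. This global-frame step is precisely what makes the result specific to $\R^{2n+1}$ and is the missing ingredient in your outline.
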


In this special case of the standard contact structure on $\R^{2n+1}$, the result generalizes a Chow-Rashevsky Theorem for diffeomorphism groups proved in \cite{Agrachev09} for compact manifolds.

The following shows that in $\R^{2n+1}$ it is possible to deform a given path of contactomorphisms that is positive outside a compact subset to an everywhere positive path of diffeomorphisms with the same endpoints.

\begin{theorem}\label{thm:extension}
Consider $\R^{2n+1}$ equipped with its standard contact structure.
Let $(f_t)_{t\in[0,1]}$ be a path of contactomorphisms that is positive on the complement of some compact subset $K_1$.
Then there exists a positive path of diffeomorphisms $(g_t)_{t\in [0,1]}$ connecting $f_0$ and $f_1$ such that $g_t$ coincides with $f_t$ on the complement of some compact subset $K_2\supset K_1$. 
\end{theorem}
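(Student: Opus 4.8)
My plan is to look for the path in the form $g_t=f_t\circ\chi_t$, where $(\chi_t)_{t\in[0,1]}$ is a path of diffeomorphisms with $\chi_0=\chi_1=\id$, compactly supported in a compact set $K_2\supset K_1$ to be determined; then $g_0=f_0$, $g_1=f_1$, and $g_t=f_t$ on $\R^{2n+1}\setminus K_2$ hold automatically. Denoting by $X^g_t,X^f_t,X^\chi_t$ the generating vector fields and writing $f_t^*\alpha=\mu_t\alpha$ with $\mu_t>0$ (here $\alpha$ is the standard contact form), the usual composition rule gives $X^g_t=X^f_t+(f_t)_*X^\chi_t$, hence
\[
\alpha\bigl(X^g_t\bigr)\bigl(f_t(y)\bigr)=\mu_t(y)\Bigl(\beta_t(y)+\alpha\bigl(X^\chi_t\bigr)(y)\Bigr),\qquad \beta_t(y):=\frac{\alpha(X^f_t)(f_t(y))}{\mu_t(y)}.
\]
Thus $(g_t)$ is positive exactly when $\alpha(X^\chi_t)>-\beta_t$ everywhere and for every $t$. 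The point of this reduction is that $\beta_t$ depends only on $(f_t)$; since $(f_t)$ is positive outside $K_1$, one has $\beta_t(y)>0$ whenever $y\notin K^*:=\bigcup_{t\in[0,1]}f_t^{-1}(K_1)$, which is compact. Consequently, setting $M:=\max\{0,\ \max_{K^*\times[0,1]}(-\beta_t)\}$ --- a finite constant depending only on $(f_t)$ and $K_1$ --- it is enough to build a compactly supported loop $(\chi_t)$ such that, for all $t$, one has $\alpha(X^\chi_t)\ge M+1$ on a fixed neighbourhood $U$ of $K^*$, whereas off $K^*$ only $\alpha(X^\chi_t)>-\beta_t$ with $\beta_t>0$ is needed --- so there $\alpha(X^\chi_t)$ may even be slightly negative.

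Such a loop $(\chi_t)$ will be produced from Theorem~\ref{thm:Rn:all}. As $\id$ is trivially connected to $\id$ through compactly supported diffeomorphisms, the compactly supported part of that theorem gives a \emph{positive} path from $\id$ to $\id$: a positive loop $(\lambda_t)$ of diffeomorphisms with $\alpha(X^\lambda_t)>0$ everywhere that, outside some compact set $C$, is as $C^\infty$-close to the identity as we wish. Conjugating by a contact dilation $\delta_s$ with $\delta_s^*\alpha=s^2\alpha$ (for instance $\delta_s(x_i,y_i,z)=(sx_i,sy_i,s^2z)$) yields a positive loop $\delta_s\circ\lambda_t\circ\delta_s^{-1}$ whose generating vector field $X$ satisfies $\alpha(X)=s^2\,\alpha(X^\lambda_t)\circ\delta_s^{-1}$; choosing $s$ large --- depending on $M$ and on a positive lower bound for $\alpha(X^\lambda_t)$ on a fixed ball about the origin --- makes $\delta_s\circ\lambda_t\circ\delta_s^{-1}$ satisfy $\alpha(X)\ge M+1$ on a large ball while remaining everywhere positive. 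Now truncate: replace $\delta_s\circ\lambda_t\circ\delta_s^{-1}$ by a diffeomorphism $\chi_t$ that agrees with it on $\delta_s(C)$ and equals $\id$ outside a large ball $B\supset\delta_s(C)\cup U\cup K_1$, interpolating on $B\setminus\delta_s(C)$ by cutting off the displacement. Then $(\chi_t)$ is a loop supported in $K_2:=\overline B$, it coincides with $\delta_s\circ\lambda_t\circ\delta_s^{-1}$ near $K^*$ (so $\alpha(X^\chi_t)\ge M+1$ on $U$), it is positive on the rest of $\delta_s(C)$, and on $B\setminus\delta_s(C)$ it is $C^\infty$-close to $\id$ by an amount we can prescribe, so $\alpha(X^\chi_t)$ there is as close to $0$ as needed; since $B\setminus\delta_s(C)$ lies outside $K^*$, where $\beta_t$ is bounded below by a positive constant, this is enough, and $g_t:=f_t\circ\chi_t$ has all the desired properties.

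The main obstacle is making the truncation work quantitatively. The estimates must be set up in the correct order: first fix the dilation factor $s$ from $M$ and from the positivity of $\lambda$ near the origin, then the ball $B$ and $K_2$, then the positive lower bound of $\beta_t$ on $B\setminus\delta_s(C)$, and only then how $C^\infty$-close to $\id$ on the relevant compact region the loop $\lambda$ from Theorem~\ref{thm:Rn:all} must be. Checking that these choices are mutually compatible --- and, if necessary, that the positive loops furnished by Theorem~\ref{thm:Rn:all} carry a fixed amount of positivity near the origin irrespective of how close to $\id$ they are elsewhere, which may require looking into its proof rather than only its statement --- is where the care lies. One should also observe that $K_2$ is forced to be large, which is permitted since the statement imposes no bound on it, and verify the routine facts that the truncated $\chi_t$ is a diffeomorphism and that $(g_t)$ depends smoothly on $t$.
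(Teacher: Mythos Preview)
Your reduction to finding a compactly supported loop $(\chi_t)$ with $\alpha(X^\chi_t)>-\beta_t$ is exactly the paper's strategy. The divergence is in how the loop is built, and your construction has a genuine gap at the truncation step.

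The circularity you flag does not untangle. If you look into the proof of Theorem~\ref{thm:Rn:all}, the positive loop based at $\id$ that it produces is of the form $\phi^R_{st}\circ(\text{null path to }\phi^R_{-t})$, and its $\alpha$-component equals the constant $t$ \emph{everywhere}; there is no region of ``fixed positivity near the origin irrespective of how close to $\id$ it is elsewhere.'' To make the loop close to $\id$ (even in the compact-open sense) you must take $t$ small, and then the positivity is small as well. After conjugating by $\delta_s$ the $\alpha$-component becomes $s^2t$, which you set to at least $M+1$; but then the $z$-displacement of the dilated loop over the course of $[0,1]$ is of order $s^2t\ge M+1$ at every point, in particular on the annulus where you truncate. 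Cutting off a displacement of size $M+1$ by a bump $\rho$ produces error terms in $\alpha(X^\chi_t)$ of the same order as the positivity you created, and on the annulus you only have the uncontrolled positive lower bound of $\beta_t$ to absorb them. So the step ``$\alpha(X^\chi_t)$ there is as close to $0$ as needed'' fails: you cannot make the dilated loop $C^1$-close to $\id$ on the annulus while keeping $s^2t\ge M+1$.

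The paper sidesteps this by using the \emph{null} half of Theorem~\ref{thm:Rn:all} rather than the positive half. It takes $\chi_t=\phi_t\circ\psi_t$, where $\phi_t$ is the contact flow of a compactly supported bump Hamiltonian $h\ge 0$ chosen large on a neighbourhood of $K^*$, and $\psi_t$ is a \emph{compactly supported} null path from $\id$ to $\phi_1^{-1}$ (which exists because $\phi_1$ is compactly supported). Since $\phi_t$ is a contactomorphism it sends $\xi$ to $\xi$, so $(\phi_t)_*X^\psi_t\in\xi$ and one gets $\alpha(X^\chi_t)=h$ on the nose: a compactly supported loop whose $\alpha$-component is prescribed, with no dilation and no truncation. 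If you want to repair your argument, this is the replacement for your $\chi_t$.
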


For more general contact manifolds one can show the following.

\begin{theorem}\label{thm:path}
Let $(M,\xi)$ be a contact manifold and $(f_t)_{t\in[0,1]}$ be a compactly supported path of diffeomorphisms.
Suppose $(M,\xi)$ can be covered by finitely many Darboux charts.
Then there exist a compactly supported null path with the same endpoints and a positive path (not necessarily compactly supported) with the same endpoints.
\end{theorem}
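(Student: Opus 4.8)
\emph{Strategy.} Two observations drive the argument. First, for a fixed $\psi\in\diff(M)$ the right-translated path $(g_t\circ\psi)_t$ has the same generating vector field as $(g_t)_t$ (one has $\tfrac{d}{dt}(g_t\circ\psi)=\dot g_t\circ\psi = X^g_t\circ(g_t\circ\psi)$), so being a null path or a positive path is preserved by right-translation; hence it suffices to connect $\mathrm{id}$ to the individual factors $g_j$ of $f_1\circ f_0^{-1}$ by such paths and then concatenate the right-translated pieces. Second, the condition $\alpha(X_t)=0$ depends only on the distribution $\xi$ and is therefore transported by any contactomorphism onto a Darboux chart, while $\alpha(X_t)>0$ is transported only by coorientation-preserving ones; so we take all charts coorientation-preserving. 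Now, $(f_t)$ being compactly supported, its support is covered by finitely many of the given charts, and after refining by finitely many Darboux balls $B_1,\dots,B_m$ each contactomorphic to $(\R^{2n+1},\xi_{std})$, the fragmentation lemma yields $f_1\circ f_0^{-1}=g_m\circ\cdots\circ g_1$ with each $g_j$ the time-one map of an isotopy of $\diff(M)$ supported in $B_j$. It thus remains to connect $\mathrm{id}$ to each $g_j$ by a compactly supported null path, respectively by a positive path, in $\diff(M)$.

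\emph{The null path} is then immediate: regarding $g_j$ as a compactly supported diffeomorphism of $B_j\cong\R^{2n+1}$ in the identity component, Theorem~\ref{thm:Rn:all} provides a compactly supported null path from $\mathrm{id}$ to $g_j$ in $\diff(\R^{2n+1})=\diff(B_j)$; its support is a compact subset of $B_j$, so extending by the identity gives a compactly supported null path in $\diff(M)$, null because the chart is a contactomorphism. Concatenating the right-translations of these pieces (by the appropriate $g_{j-1}\circ\cdots\circ g_1\circ f_0$, which preserves nullity) produces the compactly supported null path from $f_0$ to $f_1$.

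\emph{The positive path} is the crux. The same scheme, now invoking the positive-path part of Theorem~\ref{thm:Rn:all}, produces a positive path from $\mathrm{id}$ to $g_j$ inside $\diff(B_j)=\diff(\R^{2n+1})$ that is $C^\infty$-close to the identity outside a compact set; but a positive path has nowhere-vanishing generating vector field, hence is never compactly supported, so it does not extend to $\diff(M)$ by the identity. I expect this to be repaired by running the argument behind Theorem~\ref{thm:extension} on $M$ instead of $\R^{2n+1}$: choose a contact form $\alpha$ on $M$ and rescale its Reeb field to a \emph{complete} vector field $R$ with $\alpha(R)>0$ everywhere (this is where covering $M$ by \emph{finitely many} Darboux charts is used, to keep the positive ``background at infinity'' global and the construction finite). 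One connects $\mathrm{id}$ to $g_j$ first by the compactly supported null path above, then makes it positive by letting it run along the flow of a small multiple of $R$ outside a compact neighbourhood of its support, spliced in by a cutoff; this alters the time-one map by a diffeomorphism that is $C^\infty$-close to, but not equal to, the identity near infinity. Undoing that error cannot be done by running $R$ backwards — that is a \emph{negative} path — so the correction must be absorbed into a positive loop supported near infinity, itself supplied in a Darboux ball by Theorem~\ref{thm:Rn:all}. Globalising this mechanism — in particular producing and splicing in the positive loop at infinity on a general non-compact $M$ — is the main obstacle; the null path and all the fragmentation bookkeeping are routine.
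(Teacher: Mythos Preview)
Your null-path argument is correct and is exactly the paper's: fragment $f_1\circ f_0^{-1}$ into pieces $g_j$ supported in single Darboux charts, apply Theorem~\ref{thm:Rn:all} in each chart to get compactly supported null paths, then concatenate right-translates.

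For the positive path, however, you have identified an obstacle but not resolved it, and the mechanism you propose (rescaled Reeb field plus a positive-loop correction at infinity) is both more complicated than necessary and, as you sketch it, incomplete. The difficulty with your rescaled Reeb field $R$ is that its flow is \emph{not} a contactomorphism once you rescale, so left-composing it with a null path no longer yields a positive path: $(\phi_t)_*X^g_t$ need not lie in $\xi$. And if you keep the honest Reeb flow, you have no way on a general $M$ to reach $\phi_{-1}$ by a null path, since it is not compactly supported.

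The paper's key idea bypasses this entirely. Take a partition of unity $\psi_1,\dots,\psi_m$ subordinate to the finite Darboux cover and let $\phi_t^i$ be the contact flow of Hamiltonian $\psi_i$. Each $\phi_t^i$ is a \emph{compactly supported contactomorphism} in a single Darboux chart. The composition $\phi_t:=\phi_t^1\circ\cdots\circ\phi_t^m$ is a positive path of contactomorphisms (each term in its Hamiltonian is non-negative, and where one $\psi_i$ vanishes that flow fixes the point, so the remaining terms sum to something positive). Crucially, $\phi_{-1}$ is now a finite product of compactly supported diffeomorphisms each sitting in a Darboux chart, so by the null-path half of the argument, $\phi_{-1}$---and hence $\phi_{-1}f_1$---is reached by a null path $(g_t)$. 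Then $(\phi_t\circ g_t)$ is positive because $\phi_t$, being a contactomorphism, pushes the null vector field $X^g_t$ back into $\xi$; and its endpoints are $\mathrm{id}$ and $f_1$. This is where the finite-cover hypothesis is actually used: to make $\phi_t$ a \emph{finite} composition of compactly supported pieces, so that its time-$(-1)$ map is reachable by null paths.
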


For the case of compactly supported diffeomorphisms the local contractibility of the $C^{\infty}$-topology implies the following.

\begin{theorem}\label{thm:homotopic}
Any path of compactly supported diffeomorphisms $(f_t)_{t\in[0,1]}$ is homotopic in $\diff_c(M)$ with fixed endpoints to a compactly supported null path.
If $M$ is closed, any path of diffeomorphisms is homotopic with fixed endpoints to a positive path.
Moreover, there exist $C^{\infty}$-small positive and null loops in $\diff_0(M)$.
\end{theorem}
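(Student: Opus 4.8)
The plan is to deduce all three statements from purely \emph{local} facts, using that $\diff_c(M)$ with the $C^\infty$-topology is locally contractible --- being locally modelled on the Fréchet space of compactly supported vector fields --- so that two paths which share their endpoints and both lie in a common contractible neighbourhood are automatically homotopic with fixed endpoints. The other ingredient is the observation, already implicit in the paper, that positivity and nullity of a path are preserved under right translation $(h_t)_t\mapsto(h_t\circ b)_t$ by a fixed diffeomorphism $b$, since the generating vector field is unchanged, although they are \emph{not} preserved under left translation; this transports reachability statements based at $\id$ to an arbitrary basepoint. I will think of null and positive paths as trajectories of two control systems on the diffeomorphism group, with admissible velocities the sections $\Gamma(\xi)$ and the fields $\{X:\alpha(X)>0\}$, respectively.

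\emph{First assertion.} First I would choose a partition $0=t_0<\dots<t_N=1$ fine enough that each restricted path $(f_t)_{t\in[t_i,t_{i+1}]}$ lies in a contractible neighbourhood $\mathcal U_i$ of $f_{t_i}$ in $\diff_c(M)$; then the displacement $h_i:=f_{t_{i+1}}\circ f_{t_i}^{-1}$ is $C^\infty$-close to $\id$ and supported in a fixed compact set. By the fragmentation lemma, $h_i$ is a product of a bounded number of compactly supported diffeomorphisms, each supported in a single Darboux chart and each $C^\infty$-close to $\id$. In such a chart, where $\xi$ is the standard contact structure on an open subset of $\R^{2n+1}$, the local form of the Chow--Rashevsky theorem of \cite{Agrachev09} --- applicable because the sections of $\xi$ form a bracket-generating family, the same mechanism that drives Theorem~\ref{thm:Rn:all} --- connects $\id$ to each factor by a null path supported in that chart and staying in an arbitrarily small neighbourhood of $\id$. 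Composing these null paths (passing from one partial product to the next by right translation, which keeps the paths null) and then right-translating by $f_{t_i}$ yields a null path from $f_{t_i}$ to $f_{t_{i+1}}$ contained in $\mathcal U_i$, hence homotopic with fixed endpoints to $(f_t)_{t\in[t_i,t_{i+1}]}$. Concatenating over $i$, and reparametrizing each piece to have vanishing velocity near the junctions --- which preserves nullity, as it only rescales the generating field by a non-negative function of $t$ --- produces the required compactly supported null path homotopic with fixed endpoints to $(f_t)$.

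\emph{Second and third assertions.} Suppose $M$ is closed, so that there is a globally defined Reeb field $R$ with $\alpha(R)\equiv1$. For the second assertion I would repeat the scheme above with ``null path'' replaced by ``positive path'': what is needed is that any two $C^\infty$-close diffeomorphisms are joined by a positive path staying $C^\infty$-close to them, which after right translation is a local controllability statement for the positive control system based at $\id$. Granting it, I would partition, replace each small piece by a short positive path inside a contractible neighbourhood, concatenate, and smooth the finitely many corners by rerouting a short positive subpath around each one. The third assertion is then immediate: a positive, respectively null, path from $\id$ to $\id$ contained in a prescribed neighbourhood of $\id$ is a $C^\infty$-small positive, respectively null, loop; small null loops exist by the drift-free local controllability used in the first assertion (for instance concatenate a short null path $\id\leadsto g$, with $g\ne\id$ close to $\id$, with its reversal, which is again null), and small positive loops exist by the local controllability of the positive system.

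The step I expect to be the main obstacle is exactly this local controllability of the positive system. Because the condition $\alpha(X)>0$ cuts out an open \emph{half-space} rather than a linear subspace, the system carries a drift in the Reeb direction and Chow--Rashevsky does not apply directly; a naive perturbation --- adding a small multiple of $R$ to the generating field of a short null loop --- makes the endpoint miss $\id$. The way around is that $-R$ belongs to $[\Gamma(\xi),\Gamma(\xi)]$, because $d\alpha|_\xi$ is non-degenerate, so the drift is cancellable by a second-order bracket effect: one prescribes a short $\xi$-tangent isotopy whose Reeb holonomy exactly absorbs the drift and then closes the loop by a convergent iteration or implicit-function argument, keeping every quantity $C^\infty$-small. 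Carrying this out --- and reconciling it with the bookkeeping of the chart decomposition and the smoothing at junctions --- is the technical core; the first assertion, by contrast, is drift-free and only requires care to keep each local null path inside the chosen contractible neighbourhood.
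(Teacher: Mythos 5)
Your first assertion is handled essentially as in the paper: subdivide the path into pieces lying in small contractible neighbourhoods, fragment the displacement over Darboux charts, replace each fragment by a small null path from the drift-free Chow--Rashevsky mechanism, and concatenate. That part is fine.

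For the positive case (second and third assertions) your proposal diverges from the paper, and the divergence points at a genuine gap. You correctly notice that $\{X:\alpha(X)>0\}$ is an affine half-space and hence a control system with drift, and that Chow--Rashevsky in its drift-free form does not give local controllability based at $\id$. You then propose to fix this by cancelling the Reeb drift via a second-order bracket effect, a Reeb-holonomy argument, and ``a convergent iteration or implicit-function argument.'' This is a sketch of a much harder local controllability theorem that you do not prove, and you yourself flag it as ``the technical core.'' As written, the proposal does not establish the second or third assertion.

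The paper avoids this difficulty entirely by not trying to solve the positive reachability problem head-on. On a closed $M$ fix a contact form $\alpha$ and its Reeb flow $\phi_t$. To join two nearby diffeomorphisms $f_{t_i}$ and $f_{t_{i+1}}$ by a positive path, one first builds a \emph{null} path $(g_s)$ from $f_{t_i}$ to the re-targeted endpoint $\phi_{-\e}\circ f_{t_{i+1}}$ (which is still close to $f_{t_i}$ for $\e$ small, so Proposition~\ref{prop:Rn} plus fragmentation applies), and then takes $s\mapsto \phi_{\e s}\circ g_s$. Because the Reeb flow preserves $\alpha$ and $\alpha(R)=1$, the contact Hamiltonian of this composition equals $\e>0$, so the path is positive; and it runs from $f_{t_i}$ to $f_{t_{i+1}}$. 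The same device gives small positive loops: take a small null path $\id\leadsto\phi_{-\e}$ and compose with the Reeb flow. This replaces your proposed iteration/implicit-function step with an elementary algebraic identity, and it is the step you are missing. I would also flag that in your endgame you propose to ``reroute a short positive subpath around'' junction corners to smooth them; with the Reeb-composition trick this is unnecessary, since one can reparametrize the null pieces to have vanishing $t$-derivative at the junctions before composing, and the Reeb term keeps the composed path strictly positive there.
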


On many closed contact manifolds it is known that a contractible positive loop of contactomorphisms can never be contracted in $\cont_0(M,\xi)$ via non-negative loops, see e.g. \cite[Theorem 1.11]{Eliashberg06} for the case of $S^{2n+1}$.
This changes for diffeomorphisms.

\begin{theorem}\label{cor:positive:contraction}
Let $(M,\xi)$ be a closed contact manifold.
Then there exists a homotopy of loops $H\colon S^1\times [0,1]\rightarrow\diff_0(M)$ such that $H(t,1)=\mathrm{id}$ for all $t\in S^1$ and $t\mapsto H(t,s)$ is a positive loop for all $s\in [0,1)$.
\end{theorem}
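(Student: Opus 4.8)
The plan is to exhibit a smooth one-parameter family of loops $(\gamma^{(\delta)})_{\delta\in[0,\delta_0]}$ in $\diff_0(M)$ with $\gamma^{(\delta)}$ positive for every $\delta>0$ and $\gamma^{(0)}$ the constant loop at $\mathrm{id}$; reversing and rescaling the parameter then gives the homotopy $H$. The only structural point to keep in mind is that, on a closed manifold, a positive loop is generated by a time-dependent vector field $X_\theta$ with $\alpha(X_\theta)>0$ at \emph{every} point of $M$, so $X_\theta$ vanishes nowhere: the loops can be made $C^\infty$-small but not localized, and they must shrink while staying everywhere transverse to $\xi$. This is exactly the behaviour of the $C^\infty$-small positive loops produced in the proof of Theorem~\ref{thm:homotopic}, and the content of the present statement is that that construction can be run with smooth dependence on its size.

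Fix a contact form $\alpha$ defining the coorientation of $\xi$, let $R$ be its Reeb vector field and $(\psi_t)_{t\in\R}$ the (complete, since $M$ is closed) Reeb flow. For $\delta>0$ the path $\theta\mapsto\psi_{\delta\theta}$, $\theta\in[0,1]$, is positive, because its generating vector field is $\delta R$ and $\alpha(\delta R)=\delta>0$; it is however not a loop, since it ends at $\psi_\delta\neq\mathrm{id}$. To close it up I would correct it by a path tangent to $\xi$: pick $(\eta^{(\delta)}_\theta)_{\theta\in[0,1]}$ in $\diff_0(M)$ with $\eta^{(\delta)}_0=\mathrm{id}$, $\eta^{(\delta)}_1=\psi_{-\delta}$, which moreover equals $\mathrm{id}$ for $\theta$ near $0$ and $\psi_{-\delta}$ for $\theta$ near $1$ and whose generating vector fields $Y^{(\delta)}_\theta$ all lie in $\Gamma(\xi)=\ker\alpha$, and set
\[
\gamma^{(\delta)}_\theta:=\psi_{\delta\theta}\circ\eta^{(\delta)}_\theta,\qquad \theta\in[0,1].
\]
Then $\gamma^{(\delta)}_0=\gamma^{(\delta)}_1=\mathrm{id}$ and the behaviour of $\eta^{(\delta)}$ near the endpoints makes $\gamma^{(\delta)}$ smooth as a loop on $S^1$; a direct computation, using that composition of paths of diffeomorphisms adds the respective generators after a pushforward, gives
\[
X^{(\delta)}_\theta=\delta R+(\psi_{\delta\theta})_*Y^{(\delta)}_\theta .
\]
Since the Reeb flow satisfies $\psi_t^*\alpha=\alpha$ and $Y^{(\delta)}_\theta\in\ker\alpha$, the second term is annihilated by $\alpha$, so $\alpha(X^{(\delta)}_\theta)\equiv\delta$ and $\gamma^{(\delta)}$ is a positive loop based at $\mathrm{id}$ for every $\delta>0$.

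It remains to choose the corrections $\eta^{(\delta)}$ so that $(\delta,\theta)\mapsto\eta^{(\delta)}_\theta$ is smooth with $\eta^{(0)}_\theta\equiv\mathrm{id}$; then $\gamma^{(0)}$ is the constant loop and $H(t,s):=\gamma^{((1-s)\delta_0)}_t$ is the desired map, the contraction staying positive for the structural reason that its generating field has constant $\alpha$-value $(1-s)\delta_0>0$ on all slices $s<1$. This smooth choice of corrections is the one genuinely technical point. As $\xi$ is a contact distribution it is bracket generating, so by the Chow--Rashevsky theorem for diffeomorphism groups of \cite{Agrachev09} every diffeomorphism sufficiently $C^\infty$-close to $\mathrm{id}$ — in particular $\psi_{-\delta}$ for $\delta\le\delta_0$ with $\delta_0$ small — is the time-one map of a time-dependent vector field with values in $\Gamma(\xi)$; this already yields the $C^\infty$-small positive loops of Theorem~\ref{thm:homotopic}. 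The parametric strengthening I need is that the $\Gamma(\xi)$-valued controls realizing $\psi_{-\delta}$ may be chosen to depend smoothly on $\delta$ and to be the zero control at $\delta=0$ (where $\psi_{-\delta}=\mathrm{id}$). I would obtain this either from openness of the end-point map near $\mathrm{id}$ together with a smooth local section, or by hand: fragment $\psi_{-\delta}$ into diffeomorphisms supported in the members of a fixed finite Darboux cover of $M$ and realize each factor by explicit Lie-bracket maneuvers inside $\xi$, checking that each step of this standard Chow--Rashevsky argument depends smoothly on $\delta$ and is trivial when $\delta=0$. Granting this parametric refinement, all assertions of the theorem follow from the identity $\alpha(X^{(\delta)}_\theta)\equiv\delta$ established above.
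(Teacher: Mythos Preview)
Your proposal is correct and follows the same strategy as the paper: produce a positive loop as (positive contactomorphism path) $\circ$ (null correction), and let the size parameter go to zero. The computation $\alpha(X^{(\delta)}_\theta)\equiv\delta$ is exactly the mechanism the paper uses, relying on the fact that the positive factor consists of contactomorphisms so that the pushed-forward null correction stays in $\xi$.

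The only organizational difference is where the fragmentation happens. You use the global Reeb flow $\psi_{\delta\theta}$ and then must fragment $\psi_{-\delta}$ into Darboux-supported pieces to build $\eta^{(\delta)}$ parametrically. The paper instead pre-fragments the positive path: it takes a partition of unity $\psi_1,\dots,\psi_k$ subordinate to a Darboux cover and uses $\phi^\delta_t:=\phi^{1,\delta}_t\circ\cdots\circ\phi^{k,\delta}_t$, where $\phi^{i,\delta}_t$ is the contact flow of Hamiltonian $\delta\psi_i$. Each factor is then already compactly supported in a single Darboux chart, so the explicit formula~(\ref{eq1}) from Proposition~\ref{prop:Rn} yields the null path to $(\phi^{i,\delta}_s)^{-1}$ directly, with continuous dependence on the parameter read off from the formula. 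This sidesteps invoking a parametric fragmentation lemma or a smooth section of the end-point map. Both routes work; the paper's is slightly more self-contained given the tools already developed, while yours makes the constant value $\alpha(X^{(\delta)}_\theta)=\delta$ especially transparent.
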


\subsection{Positive paths between submanifolds and thermodynamic processes}

As pointed out in \cite[Remark 5.4]{Entov25} one can also define a notion of positivity for smooth families of embeddings into a contact manifold.
Let $N$ be a smooth manifold and $\iota\colon N\times[0,1]\hookrightarrow M$ be a smooth family of embeddings into a cooriented contact manifold $(M,\xi=\ker\alpha)$.
We assume that all embeddings are proper in the sense that the intersection of the image with any compact subset in $M$ is compact.
Call $\iota$ \textbf{positive/ non-negative/ null} if $\alpha(d\iota(\partial_t))$ is positive/ non-negative/ null for any contact form $\alpha$ inducing the given coorientation, i.e., if the path $t\mapsto\iota(p,t)$ is positively/non-negatively transverse or tangent to $\xi$ for any $p\in N$.

As a direct consequence of Theorem \ref{thm:path} and the isotopy extension theorem \cite{Hirsch76} we get.

\begin{cor}\label{thm:Leg}
Let $(M,\xi)$ be a contact manifold and $N_1,N_2$ be two submanifolds that are isotopic via a compactly supported isotopy.
Assume that $(M,\xi)$ can be covered by finitely many Darboux charts.
Then there exist a positive isotopy and a compactly supported null isotopy between $N_1$ and $N_2$.
\end{cor}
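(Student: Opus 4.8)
The plan is to transfer the path-level statement of Theorem \ref{thm:path} to the submanifold level by means of the isotopy extension theorem. First I would start from the given compactly supported isotopy between $N_1$ and $N_2$: by definition it is the restriction of an ambient compactly supported isotopy $(f_t)_{t\in[0,1]}$ of $M$ with $f_0=\mathrm{id}$ and $f_1(N_1)=N_2$ (this is exactly what the isotopy extension theorem \cite{Hirsch76} provides, and since $N_1,N_2$ are isotopic via a compactly supported isotopy one may take $(f_t)$ compactly supported as well). Now apply Theorem \ref{thm:path} to the path $(f_t)$: since $(M,\xi)$ is covered by finitely many Darboux charts, we obtain a compactly supported null path $(g_t)$ with $g_0=\mathrm{id}$, $g_1=f_1$, and a positive path $(h_t)$ (not necessarily compactly supported) with the same endpoints.

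The next step is to define the isotopies of submanifolds by $\iota^{\mathrm{null}}(p,t)=g_t(p)$ and $\iota^{\mathrm{pos}}(p,t)=h_t(p)$ for $p\in N_1$. These are smooth families of embeddings of $N_1$ with $\iota(\cdot,0)$ the inclusion of $N_1$ and $\iota(\cdot,1)$ a parametrization of $N_2$, so they are isotopies between $N_1$ and $N_2$. It remains to check the sign conditions. For the null path one has $\alpha(X_t^{g})\equiv 0$ as a vector field on $M$; since $d\iota^{\mathrm{null}}(\partial_t)\big|_{(p,t)} = X_t^{g}(g_t(p))$ is the value of this vector field along the orbit, we get $\alpha\bigl(d\iota^{\mathrm{null}}(\partial_t)\bigr)\equiv 0$, i.e. the isotopy is null; likewise it is compactly supported because $(g_t)$ is. For the positive path, $\alpha(X_t^{h})>0$ everywhere on $M$, hence in particular along the orbits of points of $N_1$, so $\iota^{\mathrm{pos}}$ is positive. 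Finally, properness of the embeddings is automatic: $g_t,h_t$ are diffeomorphisms of $M$, so the image of any $\iota(\cdot,t)$ is a closed (embedded) submanifold, and its intersection with a compact set is compact.

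I expect the only real subtlety to be bookkeeping rather than a genuine obstacle: one must make sure the ambient isotopy produced by the isotopy extension theorem is genuinely compactly supported (which is what lets Theorem \ref{thm:path} apply and what yields a \emph{compactly supported} null isotopy), and one must be slightly careful that ``$N_1,N_2$ isotopic via a compactly supported isotopy'' is interpreted as the existence of such an ambient isotopy — if instead one is only handed an isotopy of embeddings supported in a compact set, the standard relative form of the isotopy extension theorem still upgrades it to a compactly supported ambient one, so nothing is lost. Everything else is a direct substitution of the conclusions of Theorem \ref{thm:path}.
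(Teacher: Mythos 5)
Your proposal is correct and takes essentially the same approach the paper intends: apply the isotopy extension theorem to get a compactly supported ambient isotopy $(f_t)$, invoke Theorem \ref{thm:path} for the null and positive paths of diffeomorphisms with the same endpoints, and restrict to $N_1$, noting that $\alpha(d\iota(\partial_t))$ at $(p,t)$ equals $\alpha(X_t)$ at the orbit point. The paper labels this a ``direct consequence'' without spelling out the details; your elaboration, including the properness check, fills them in correctly.
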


In \cite{Entov25} the authors look at positive (non-negative) isotopies of Legendrians in the context of thermodynamic processes.
More precisely, they look at sets of equilibrium states in a thermodynamic phase space $\mathcal{T}$.
The space $\mathcal{T}$ naturally carries a contact structure and is contactomorphic to the $1$-jet bundle $J^1\R^n\cong \R^{2n+1}$ of $\R^n$ (or more generally of any smooth manifold $X$) equipped with its canonical contact structure.
Sets of equilibrium states naturally form Legendrian submanifolds in this space, i.e., submanifolds $L$ with $TL\subset \xi$.
As before we assume that $\mathcal{T}$ can be covered by finitely many Darboux charts.
Slow thermodynamic processes can be described in this language by compactly supported non-negative isotopies of Legendrians, see \cite[Section 5]{Entov25}.
In \cite[Remark 5.4]{Entov25} the authors consider smooth (possibly non-Legendrian) isotopies between two Legendrians and raise the question if the non-negativity assumption is restrictive in this setting.
The following corollary of Theorem \ref{thm:Rn:all} and Theorem \ref{thm:path} provides a negative answer.

\begin{cor}
Let $L_1, L_2$ be Legendrians in $\mathcal{T}$ that are isotopic via a compactly supported smooth isotopy.
Then there exist a positive and a compactly supported null isotopy between $L_1$ and $L_2$.
If $\mathcal{T}\cong J^1\R^n$ the statement is true for any two Legendrians $L_1,L_2$ such that there exist $f\in\diff_0(J^1\R^n)$ with $f(L_1)=L_2$.
Moreover, in this case the positive isotopy can be chosen arbitrarily close in the $C^{\infty}$-compact open topology to the constant isotopy in the complement of a compact subset.
\end{cor}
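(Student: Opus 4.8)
The plan is to derive everything from Theorem~\ref{thm:Rn:all} and Corollary~\ref{thm:Leg}, where the genuine content lies. For the first assertion I would simply note that a Legendrian is in particular a submanifold and that $\mathcal{T}$ is, by the standing assumption, covered by finitely many Darboux charts; hence Corollary~\ref{thm:Leg} applied with $N_1=L_1$ and $N_2=L_2$ at once produces a positive isotopy and a compactly supported null isotopy between them.

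For the case $\mathcal{T}\cong J^1\R^n$ I would fix a contactomorphism identifying $J^1\R^n$ (with its canonical contact form $\alpha$) with the standard contact $\R^{2n+1}$, so that Theorem~\ref{thm:Rn:all} becomes available, and fix a proper embedding $\iota_1\colon N\hookrightarrow J^1\R^n$ parametrizing $L_1$. Given $f\in\diff_0(J^1\R^n)$ with $f(L_1)=L_2$, I would apply Theorem~\ref{thm:Rn:all} to obtain a positive path $(f_t)_{t\in[0,1]}$ and a null path $(g_t)_{t\in[0,1]}$ of diffeomorphisms with $f_0=g_0=\mathrm{id}$ and $f_1=g_1=f$: here one should watch the direction of the paths furnished by the theorem, but applying it to $f^{-1}$ if necessary and post-composing with a fixed diffeomorphism (an operation that alters neither the generating time-dependent vector field nor, therefore, positivity or nullity) arranges the endpoints as stated. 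Then I would set $\iota(p,t)=f_t(\iota_1(p))$ and $\iota'(p,t)=g_t(\iota_1(p))$. For each $t$ the map $\iota(\cdot,t)=f_t\circ\iota_1$ is the composition of a proper embedding with a diffeomorphism, hence again a proper embedding, so $\iota$ and $\iota'$ are smooth families of proper embeddings, with $\iota(\cdot,0)=\iota'(\cdot,0)$ parametrizing $L_1$ and $\iota(\cdot,1)=\iota'(\cdot,1)$ parametrizing $f(L_1)=L_2$. Writing $X_t^f$ and $X_t^g$ for the time-dependent vector fields generating $(f_t)$ and $(g_t)$, one has $d\iota(\partial_t)|_{(p,t)}=X_t^f(\iota(p,t))$ and $d\iota'(\partial_t)|_{(p,t)}=X_t^g(\iota'(p,t))$; positivity of $(f_t)$ then gives $\alpha(d\iota(\partial_t))>0$, so $\iota$ is a positive isotopy between $L_1$ and $L_2$, and $\alpha(X_t^g)\equiv0$ makes $\iota'$ a null isotopy between them.

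Finally, if $L_1$ and $L_2$ are isotopic through a compactly supported isotopy of submanifolds --- equivalently, by the isotopy extension theorem, through a compactly supported ambient isotopy, so that $f$ above may be chosen connected to $\mathrm{id}$ through compactly supported diffeomorphisms --- then I would instead invoke the second part of Theorem~\ref{thm:Rn:all} to take $(f_t)$ positive and arbitrarily $C^{\infty}$-compact open close to $\mathrm{id}$ outside a compact set and $(g_t)$ null and compactly supported; the resulting $\iota$ is then arbitrarily $C^{\infty}$-compact open close to the constant isotopy $(p,t)\mapsto\iota_1(p)$ outside a compact set, and $\iota'$ is a compactly supported null isotopy. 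I do not expect any serious obstacle: the only points needing care are the orientation of the paths given by Theorem~\ref{thm:Rn:all}, the persistence of properness for the embeddings $f_t\circ\iota_1$ (and of the assumption that the Legendrians themselves are proper), and the passage between a compactly supported isotopy of submanifolds and a compactly supported path in $\diff_0$; all the analytic weight is carried by Theorem~\ref{thm:Rn:all} and Corollary~\ref{thm:Leg}.
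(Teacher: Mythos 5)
Your proposal is correct and follows exactly the approach the paper intends: the corollary is stated without a separate proof precisely because it is obtained by applying Corollary~\ref{thm:Leg} (for the compactly supported case) and by pushing a parametrization of $L_1$ through the positive and null paths of diffeomorphisms furnished by Theorem~\ref{thm:Rn:all} (for the $J^1\R^n$ case), with the refined statement invoking the second part of Theorem~\ref{thm:Rn:all}. The only point worth noting is that the direction issue you flag does not actually arise, since Theorem~\ref{thm:Rn:all} already provides paths from $\id$ to $f$, and that the operation preserving the generating vector field is right composition $f_t\mapsto f_t\circ g$ rather than left composition; but neither affects the substance of the argument.
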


\begin{remark}
The results follow from the isotopy extension theorem for submanifolds related by a smooth compactly supported isotopy \cite[Chapter 8]{Hirsch76}.
However, for general (non-compactly supported) smoothly isotopic submanifolds it does in general not seem to be known if there exists an isotopy between them that is induced by a path of diffeomorphisms, which is needed in order to apply Theorem \ref{thm:path}.
\end{remark}

\begin{cor}
Let $\iota\colon L\times[0,1]\hookrightarrow J^1\R^n$ be a Legendrian isotopy in $J^1\R^n$ that is induced by a path of contactomorphisms and that is positive outside a compact subset.
Then there exists a positive smooth isotopy $\hat{\iota}\colon L\times[0,1]\hookrightarrow J^1\R^n$ between $L_0$ and $L_1$ that coincides with $\iota$ in the complement of a compact subset.
\end{cor}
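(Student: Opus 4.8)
The plan is to deduce the statement from Theorem~\ref{thm:extension} by transporting $L_0$ along a suitable positive path of diffeomorphisms; the point will be to first upgrade the hypothesis from ``$\iota$ is positive outside a compact set'' to ``$\iota$ is induced by a path of contactomorphisms that is itself positive outside a compact subset of $J^1\R^n$''. To fix notation, write $\iota_0:=\iota(\cdot,0)$ and choose (pre-composing with $f_0^{-1}$ if necessary) a path of contactomorphisms $(f_t)_{t\in[0,1]}$ with $f_0=\mathrm{id}$ and $\iota(p,t)=f_t(\iota_0(p))$; let $X_t$ generate $(f_t)$ and $h_t:=\alpha(X_t)$ be its contact Hamiltonian, so that $\alpha(d\iota_{(p,t)}(\partial_t))=h_t(\iota(p,t))$. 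The hypothesis then says that $h_t(\iota(p,t))>0$ whenever $\iota(p,t)$ lies outside some compact set $K$.

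The key step is to replace $(f_t)$ by a path of contactomorphisms $(\tilde f_t)$ with $\tilde f_0=\mathrm{id}$ that still satisfies $\tilde f_t(\iota_0(p))=\iota(p,t)$ for all $(p,t)$ but whose contact Hamiltonian is positive outside a compact set. Since the contact vector field of a function at a point depends only on the $1$-jet of the function there, it suffices to produce a smooth family $\tilde h_t$ on $J^1\R^n$ that coincides with $h_t$ near the graph $\Gamma:=\{(\iota(p,t),t)\}\subset J^1\R^n\times[0,1]$ and is positive off a compact set: then $X_{\tilde h_t}=X_{h_t}$ along $\iota(L\times\{t\})$ for every $t$, so $t\mapsto\iota(p,t)$ is the integral curve of $X_{\tilde h_t}$ through $\iota_0(p)$, whence $\tilde f_t(\iota_0(p))=\iota(p,t)$ and $(\tilde f_t)$ induces $\iota$. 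To build $\tilde h_t$, use continuity of $h_t$ and the above positivity to pick an open neighbourhood $W$ of $\Gamma$ and a compact set $K'\supset K$ with $h_t(q)>0$ whenever $(q,t)\in W$ and $q\notin K'$ (enlarging $K$ slightly if needed); choose a cutoff $\beta$ equal to $1$ near $\Gamma$ and supported in $W$; and set $\tilde h_t:=\beta h_t+(1-\beta)$. This equals $h_t$ near $\Gamma$, and at any point $(q,t)$ with $q\notin K'$ its value is either $1$ (where $\beta=0$) or a convex combination of $1$ and the positive number $h_t(q)$, hence positive.

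The genuinely delicate point---which I expect to be the main obstacle---is that the contact flow of $\tilde h_t$ must be complete, so that $(\tilde f_t)$ is an honest path of contactomorphisms of $J^1\R^n$; this is arranged by additionally cutting $h_t$ off far from $\Gamma$ (at the cost of enlarging $K'$), choosing $\beta$ with controlled derivatives, and appealing to the usual completeness criteria for contact flows on $\R^{2n+1}$. Granting this, apply Theorem~\ref{thm:extension} to $(\tilde f_t)$, which is positive outside $K'$: it produces a positive path of diffeomorphisms $(g_t)_{t\in[0,1]}$ with $g_0=\mathrm{id}$, $g_1=\tilde f_1$, and $g_t=\tilde f_t$ outside some compact set $K_2$. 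Setting $\hat\iota(p,t):=g_t(\iota_0(p))$ then finishes the proof: each $g_t$ is a diffeomorphism of $J^1\R^n$ and $\iota_0$ is a proper embedding, so $\hat\iota$ is a smooth isotopy; it runs from $\iota_0(L)=L_0$ to $g_1(\iota_0(L))=\tilde f_1(\iota_0(L))=\iota(L,1)=L_1$; it is positive because $\alpha(d\hat\iota_{(p,t)}(\partial_t))=\alpha\big(X^g_t(g_t(\iota_0(p)))\big)>0$ since $(g_t)$ is a positive path of diffeomorphisms; and whenever $\iota_0(p)\notin K_2$ one has $g_t(\iota_0(p))=\tilde f_t(\iota_0(p))=\iota(p,t)$, so $\hat\iota$ coincides with $\iota$ outside the compact set $\iota_0^{-1}(K_2)$, which is compact by properness of $\iota_0$.
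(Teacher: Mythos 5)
Your reading of the hypothesis---that ``positive outside a compact subset'' modifies the \emph{isotopy} $\iota$, i.e.\ $\alpha(d\iota(\partial_t))>0$ away from a compact set, which constrains the contact Hamiltonian $h_t$ of the inducing path only \emph{along} the Legendrians $L_t$---is the natural grammatical one, and you correctly observe that this is strictly weaker than the hypothesis of Theorem~\ref{thm:extension}, which requires $h_t>0$ on the entire complement of a compact subset of $J^1\R^n$. The paper gives no proof of this corollary, and it is quite plausible the author intended the stronger hypothesis (path positive off a compact set), in which case the corollary is an immediate application of Theorem~\ref{thm:extension} via $\hat\iota(p,t):=g_t(\iota_0(p))$---which is exactly your final paragraph, and that part is correct and complete. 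So your proposal covers what the paper very likely intended, and additionally patches the gap arising from the weaker reading, which is good mathematical hygiene. The interpolation $\tilde h_t=\beta h_t+(1-\beta)$ is the right idea: it agrees with $h_t$ to all orders near the graph $\Gamma$, so by linearity of $H\mapsto X_H$ and uniqueness of ODE solutions one has $\tilde f_t\circ\iota_0=\iota(\cdot,t)$, and it is positive off a compact set by the convex-combination argument.

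The one step you flag as delicate---completeness of the flow of $\tilde h_t$---is indeed a genuine gap that you do not close. On $(\R^{2n+1},\alpha_0)$ the contact vector field of $H$ has coefficients involving the coordinate $x$ linearly (e.g.\ $X_H=(xH_z-H_y)\partial_x+H_x\partial_y+(H-xH_x)\partial_z$ for $n=1$), so ``$\tilde h_t$ bounded'' does not by itself give completeness, and even the simple Hamiltonian $H=z^2$ already generates an incomplete flow. Since $\Gamma$ is non-compact ($L$ must be non-compact for the hypothesis to be non-trivial), you cannot compactify the modification, and ``cutting $h_t$ off far from $\Gamma$'' cannot be done without destroying the agreement with $h_t$ near $\Gamma$. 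One has to argue that the transition region (where $0<\beta<1$) can be chosen so that trajectories exit it in bounded time, or else exploit that $\tilde h_t$ equals $1$ outside the tube (complete Reeb flow) and equals $h_t$ inside the smaller tube (complete by hypothesis) together with some control on derivatives of $\beta$ to bound the residence time in the transition annulus. As written, the argument asserts rather than proves this, so it is the one place where the proposal is not self-contained. Everything else---the translation between positivity of $\iota$ and positivity of $h_t\circ\iota$, the application of Theorem~\ref{thm:extension}, the verification that $\hat\iota$ is a smooth proper isotopy with the correct endpoints and the correct positivity, and the identification of the compact set $\iota_0^{-1}(K_2)$ outside which $\hat\iota=\iota$---is correct.
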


\subsection{Norms on groups of diffeomorphisms}

A further direct consequence of Theorem \ref{thm:path} and Corollary \ref{thm:Leg} is that, in contrast to contact geometry \cite{Shelukhin17, Rosen20}, the notion of contact Hamiltonian can not be used to define any meaningful Hofer-type (psudo-) norms or metrics on the group $\diff_c(M)$ or the isotopy class of any submanifold.

For instance given a path of diffeomorphisms $(f_t)_{t\in[0,1]}$ on a closed manifold $M$ with generating time-dependent vector field $X_t$, one can define its Hofer length with respect to a contact form $\alpha$ as
$$l_{\alpha}(f_t):=\int\limits_0^1\max_M|\alpha(X_t)|dt.$$
Taking the infimum of this length over all paths with the same endpoint defines a norm on $\cont_0(M,\xi)$ \cite{Shelukhin17}.

It follows from Theorem \ref{thm:path} that this infimum vanishes on $\diff_0(M)$.

\subsection*{Acknowledgements}

I thank Leonid Polterovich for his comments on a first version of the paper and for encouraging to prove parts of Theorem \ref{thm:Rn:all} and Theorem \ref{thm:extension}.
I also thank Egor Shelukhin, Souheib Allout and the anonymous referee for valuable feedback.
This work was supported by a CRM-ISM and CIRGET postdoctoral fellowship, by the Fondation Courtois and by the Radboud Excellence Initiative.

\section{Preliminaries}

\subsection{Notations}

Throughout this note $(M,\xi)$ will be a (not necessarily closed) contact manifold of dimension $2n+1$ for some $n\geq 1$.
We assume that $(M,\xi)$ is cooriented, i.e., that $\xi$ is the kernel of some global $1$-form $\alpha$ such that $\alpha\wedge d\alpha^n$ is a volume form on $M$.

We denote by $\D=\D(M):=\diff_0(M)$ the identity component of the group of $C^{\infty}$-diffeomorphisms of $M$ and by $\C=\C(M):=\cont_0(M,\xi)\subset\D$ the identity component of the group of contactomorphisms, i.e., the group of diffeomorphisms preserving $\xi$ that are connected to the identity map via contactomorphisms.

Further denote with $\D_c$ and $\C_c$ the group of compactly supported diffeomorphisms (contactomorphisms) that are connected to the identity via compactly supported diffeomorphisms (contactomorphisms).

A smooth path of diffeomorphisms $(f_t)_{t\in I}$ is a family of diffeomorphisms which is smooth as a map $I\times M\rightarrow M$.
Given a possibly time dependent vector field $X_t$, we will denote its flow at time $t$ by $\phi_t^X$ and the time dependent  vector field generated by a path of diffeomorphisms $f_t$ by $X_t^{f}$.

In the proofs we will often use the following combination of \textbf{concatenation and right translation} of two paths of diffeomorphisms starting at the identity to obtain a path from the identity to the composition of their endpoints.
Given two paths of diffeomorphisms $(f_t)_{t\in [0,1]}$ and $(g_t)_{t\in [0,1]}$ starting at the identity one can obtain a smooth path $(h_t)_{t\in [0,1]}$ from the identity to $f_1g_1$ as follows.
Let $\tau\colon [0,1]\rightarrow\ [0,1]$ be an increasing smooth function with $\tau(0)=0$, $\tau(1)=1$ and $\mathrm{supp}(\tau')\subset [1/2,1]$ and let $\eta\colon [0,1]\rightarrow\ [0,1]$ be an increasing smooth function with $\eta(0)=0$, $\eta(1)=1$ and $\mathrm{supp}(\eta')\subset [0,1/2]$.
Then the path $(h_t)_{t\in [0,1]}=(f_{\tau(t)}g_{\eta(t)})_{t\in[0,1]}$ is smooth and generates the time dependent vector field
$$X_t^h=\tau'(t)X_{\tau(t)}^f\circ g_{\eta(t)}+\eta'(t)df_{\tau(t)}\left(X_{\eta(t)}^g\right).$$
Consequently, if $(f_t)_{t\in [0,1]}$ and $(g_t)_{t\in [0,1]}$ are both null paths, then so is $(h_t)_{t\in [0,1]}$ by the definition of the functions $\tau$ and $\eta$.
Note that this is in general not true for the composition $(f_{t}g_{t})_{t\in[0,1]}$ unless $f_t$ preserves the contact structure.

By a \textbf{Darboux chart} we mean an open subset of $(M,\xi)$ that is contactomorphic to an open subset of $\R^{2n+1}$ equipped with the standard contact structure.
The Darboux Theorem \cite{Geiges} states that any point in a contact manifold has a Darboux neighbourhood, in particular any contact manifold can be covered by Darboux balls.

\subsection{Topologies on groups of diffeomorphisms}\label{sec:top}

We equip the group $\D$ with the (strong) Whitney $C^{\infty}$-topology, see \cite[Chapter 2]{Hirsch76} for details.
Note that on non-compact manifolds this topology is finer than the (weak) compact open $C^{\infty}$-topology, whereas on compact manifolds they coincide.

An important property of the Whitney topology used in the proofs is that the set of diffeomorphisms is open in the space of all smooth maps from $M$ to $M$.
This is even true in the Whitney $C^1$-topology, see \cite[Chapter 2, Theorem 1.6]{Hirsch76}.
For non-compactly supported diffeomorphisms this topology is not locally path-connected and one would need a finer topology to have this property or, for instance give $\D$ the structure of an infinite dimensional manifold, see e.g. \cite{Michor80}.
In particular in the Whitney $C^{\infty}$-topology smooth paths of diffeomorphisms are in general not continuous curves in $\mathcal{D}$ ( in fact the path connected component of $\id$ is contained in $\mathcal{D}_c$).
This makes it impossible to apply the Fragmentation Lemma \cite{Banyaga13} and is the main reason why we restrict to $\R^{2n+1}$ in Theorem \ref{thm:Rn:all}.

We equip the group of compactly supported diffeomorphisms $\D_c$ with the direct limit topology induced by the $C^{\infty}$ compact open topology on the family of subgroups $\D_c(K)$ of diffeomorphisms supported in compact subsets $K\subset M$, see \cite{Michor78, Banyaga13}.
This topology gives rise to the structure of a Lie group on $\D_c$, in particular it is locally contractible \cite{Michor80, Banyaga13}.

\subsection{A Chow-Rashevsky Theorem for diffeomorphisms}

The classical Chow-Rashevsky Theorem in sub-Riemannian geometry states that on a manifold with a bracket generating distribution, i.e., a distribution that is near each point spanned by vector fields whose iterations of Lie-brackets span the whole tangent space, every two points are connected by a curve tangent to the distribution, see e.g. \cite{Gromov96}.
An important example of bracket generating distributions are contact structures.

A distribution $\xi$ on a manifold $M$ naturally defines a distribution $\Xi$ on the group of contactomorphisms by looking at the right invariant set of vector fields
$$\Xi_{\id}:=\{X\in\mathfrak{X}(M)|X_p\in \xi_p \text{ for all } p\in M\}.$$

In \cite[Conjecture 3.5]{Khesin09} the authors conjecture that the Chow-Rashevsky Theorem also holds for paths of diffeomorphisms tangent to the distribution $\Xi$ if the distribution $\xi$ is bracket generating.
This conjecture was proven for compact manifolds in \cite[Corollary 3.1]{Agrachev09}.
In particular the authors show the following.

\begin{theorem}[\cite{Agrachev09}]\label{thm:Chow}
Let $\xi$ be a bracket generating distribution on a smooth compact manifold $M$.
Then for every $f\in\D(M)$ there exist $X_1,\cdots, X_k\in \Xi_{\id}$ such that 
$$f=\phi_1^{X_1}\circ\cdots\circ\phi_1^{X_k}.$$
\end{theorem}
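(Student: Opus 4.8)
The plan is to prove that the subgroup $G\subseteq\D(M)$ generated by all time-one flows $\phi_1^X$ with $X\in\Xi_{\id}$ equals the connected group $\D(M)$; unwinding the definition of $G$ then gives the desired factorization $f=\phi_1^{X_1}\circ\cdots\circ\phi_1^{X_k}$. Since $M$ is compact every $X\in\Xi_{\id}$ is complete, so the flows $\phi_t^X$ exist for all $t\in\R$, and because $-X\in\Xi_{\id}$ the set $G$ is closed under inverses; hence $G$ is exactly the collection of diffeomorphisms of the stated form. As $\D(M)$ is a connected topological group in the $C^{\infty}$-topology, it suffices to show that $G$ contains a neighbourhood of $\mathrm{id}$: then $G$ is open, an open subgroup of a connected group is the whole group, and we are done.

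First I would record the purely Lie-algebraic consequence of bracket generation: the Lie subalgebra $\mathfrak g\subseteq\mathfrak X(M)$ generated by $\Xi_{\id}$ is all of $\mathfrak X(M)$. Indeed $\Xi_{\id}$ is a $C^{\infty}(M)$-module, and the Leibniz rule $[fX,Y]=f[X,Y]-(Yf)X$ shows that $\mathfrak g$ is again a $C^{\infty}(M)$-submodule of $\mathfrak X(M)$. Bracket generation of $\xi$ means that near every point finitely many iterated brackets of local sections of $\xi$ span the tangent space; hence $\mathfrak g$ contains a local frame near every point, and a partition of unity upgrades this to $\mathfrak g=\mathfrak X(M)$. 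One also gets the quantitative refinement that there is an integer $r$ (the step of $\xi$) such that iterated brackets of elements of $\Xi_{\id}$ of length at most $r$, multiplied by smooth functions, already span $T_pM$ for every $p$.

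Next I would transport this largeness from the Lie algebra to the group by an open-mapping argument. For each $k$ consider the end-point map
$$E_k\colon\R^k\times\Xi_{\id}^{\,k}\longrightarrow\D(M),\qquad E_k(t_1,X_1,\dots,t_k,X_k)=\phi_{t_k}^{X_k}\circ\cdots\circ\phi_{t_1}^{X_1},$$
whose image is contained in $G$. Its differential at a base point is a sum of terms obtained by applying $\mathrm{Ad}(\phi_{t_k}^{X_k}\circ\cdots\circ\phi_{t_{i+1}}^{X_{i+1}})$ to $X_i$ and to the vector fields $\int_0^{t_i}(\phi_s^{X_i})_*Y_i\,ds$ with $Y_i\in\Xi_{\id}$, and $\mathrm{Ad}(\phi_t^X)=e^{t\,\mathrm{ad}_X}$ expands as a series in iterated Lie brackets. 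Taking $k$ large enough and the base point $(t_i,X_i)$ suitably generic among sections of $\xi$, one can — varying the times $t_i$ and using a Vandermonde/pigeonhole argument — isolate each iterated bracket monomial of length at most $r$ in the image of $dE_k$, and then conclude from the previous step that $dE_k$ is onto the tangent space of $\D(M)$ at the image point. An implicit function theorem then makes $E_k$ an open map near that base point, so $G$ contains an open subset $V\subseteq\D(M)$; translating $V$ by the inverse of a point of $V$ (an element of $G$) yields an open neighbourhood of $\mathrm{id}$ inside $G$, which completes the argument.

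The main obstacle is the analytic heart of the open-mapping step, and it has two parts. First, the bookkeeping behind the surjectivity of $dE_k$: one must genuinely verify that enough flow factors together with perturbation of the times realise every bracket monomial up to step $r$ in the image, which is the group-level analogue of the classical orbit theorem and requires care about how the $\mathrm{Ad}$-expansions interact. Second, and more seriously, $\D(M)$ is only a tame Fréchet (ILH) Lie group rather than a Banach manifold, so the ordinary inverse function theorem is not available; one must either invoke a Nash--Moser type implicit function theorem — which additionally demands a tame right inverse of $dE_k$ uniformly near the base point — or run the whole argument on the Banach completions $\diff^{s}(M)$ and control the loss of derivatives when passing back to $C^{\infty}$. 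It is precisely this passage from an infinitesimal bracket condition to an honest open set of smooth diffeomorphisms, rather than merely a $C^{0}$-dense set, that is the content of \cite{Agrachev09}.
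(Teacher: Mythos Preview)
The paper does not itself prove this theorem --- it is quoted from \cite{Agrachev09} --- but it singles out Proposition~\ref{prop:Rn:AC} as ``a key step in the proof'' and carries out the same mechanism by hand in the proof of Proposition~\ref{prop:Rn}. That mechanism is different from your outline and, crucially, sidesteps exactly the obstacle you isolate at the end.

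Your overall architecture (show the subgroup $G$ generated by the $\phi_1^X$ contains a neighbourhood of $\id$, hence is open, hence equals the connected group $\D(M)$) is correct and is also how the argument in \cite{Agrachev09} is organised. The divergence is in how the neighbourhood of $\id$ is produced. You propose surjectivity of $dE_k$ followed by an implicit function theorem on the Fr\'echet manifold $\D(M)$, and you correctly flag that this requires Nash--Moser or a Sobolev-completion argument with derivative-loss control --- which you do not carry out, so the proposal as written has a real gap. The Agrachev--Caponigro argument avoids any infinite-dimensional inverse function theorem. Since $\Xi_{\id}$ is a $C^{\infty}(M)$-module, one may use flows $\phi_1^{aX}$ with $a\in C^{\infty}(M)$ arbitrary, not just real time parameters. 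After using bracket generation and conjugation by flows already in $G$ to manufacture vector fields $Z_1,\dots,Z_N$ framing $TM$, the equation $f=\phi_1^{a_1Z_1}\circ\cdots\circ\phi_1^{a_NZ_N}$ is solved for the functions $a_i$ by a \emph{finite-dimensional} inverse function theorem applied fibrewise over $M$, with smooth dependence on the base point; the explicit formulas for $\tau_1,\tau_2,\tau_3$ and the invertibility check for $\Phi_1,\Phi_2$ in the proof of Proposition~\ref{prop:Rn} are exactly this computation for the standard contact $\R^3$. This produces an open neighbourhood of $\id$ inside $G$ using only classical analysis, after which your connectedness argument finishes the job.
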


Theorem \ref{thm:Chow} in particular holds in the case when $\xi$ is a contact distribution.

A key step in the proof is the following proposition, which provides some local version of the Chow-Rashevsky theorem.

\begin{prop}[\cite{Agrachev09}]\label{prop:Rn:AC}
Let $X_1,\cdots, X_n$ be vector fields on $\R^n$ such that the $X_i(0)$ span $T_0
\R^n$.
Then there exists a compact neighbourhood $V$ of the origin and an open subset of the space of smooth functions $\mathcal{V}\subset \{f\colon V\rightarrow \R^n|f(0)=0\}$ such that every $f\in\mathcal{V}$ can be written as 
$$f=\phi_1^{a_1X_1}\circ\cdots\circ\phi_1^{a_nX_n}|_V$$
for some smooth functions $a_i\colon \R^n\rightarrow \R$ fixing the origin.
\end{prop}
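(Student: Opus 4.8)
The right-hand side of the claimed identity is a nonlinear operator on $n$-tuples of functions, and the plan is to show, via an inverse function theorem, that this operator already covers a full $C^{\infty}$-neighbourhood of the identity. Fix a compact neighbourhood $V$ of the origin, to be shrunk below. For an $n$-tuple $\mathbf{a}=(a_1,\dots,a_n)$ of smooth functions vanishing at $0$ and sufficiently $C^1$-small, the composition
\[
F(\mathbf{a}):=\phi_1^{a_1X_1}\circ\cdots\circ\phi_1^{a_nX_n}
\]
is a well-defined smooth map from $V$ to $\R^n$ with $F(0,\dots,0)=\id|_V$, and it fixes the origin because each $a_iX_i$ vanishes at $0$, so $0$ is a fixed point of every flow $\phi_1^{a_iX_i}$. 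Hence $F$ maps a neighbourhood of $0$ in $E:=\{(a_1,\dots,a_n)\in C^{\infty}(V,\R)^n\mid a_i(0)=0\}$ into $\mathcal{F}:=\{f\in C^{\infty}(V,\R^n)\mid f(0)=0\}$, and it suffices to show that the image contains a $C^{\infty}$-neighbourhood of $\id|_V$: one then takes $\mathcal{V}$ to be such a neighbourhood, and extends the functions $a_i$ produced by $F^{-1}$ from $V$ to all of $\R^n$ by a cut-off, which changes neither the flows on $V$ nor the condition $a_i(0)=0$.

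The crucial computation is the linearisation of $F$ at $0$. Since $\phi_1^{\e bX}=\phi_{\e}^{bX}$, which equals $\id$ at $\e=0$, the chain rule gives
\[
DF|_0(b_1,\dots,b_n)=\sum_{i=1}^{n}b_iX_i,
\]
an $\R^n$-valued function on $V$ vanishing at $0$, i.e.\ a tangent vector at $\id$ to $\mathcal{F}$. In coordinates this says that $DF|_0$ is multiplication by the matrix field $M(x)$ whose columns are $X_1(x),\dots,X_n(x)$. As $X_1(0),\dots,X_n(0)$ form a basis of $T_0\R^n$, the matrix $M(x)$ is invertible for all $x$ in a sufficiently small compact $V$, so $DF|_0\colon E\to T_{\id}\mathcal{F}$ is a linear isomorphism, its inverse being multiplication by the smooth matrix field $M^{-1}$ (which likewise preserves vanishing at $0$); the same holds for $DF|_{\mathbf{a}}$ with $\mathbf{a}$ near $0$, by perturbation.

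It remains to pass from this to local surjectivity of $F$ near $0$, and here lies the only real difficulty: composing maps and flowing along a vector field are not differentiable operations on spaces of $C^k$ maps — they lose derivatives — so the implicit function theorem on Banach spaces does not apply directly; the obstruction is functional-analytic, not geometric. The remedy is to stay in the $C^{\infty}$ category. The map $F$ is a tame smooth map between tame Fréchet spaces, since time-one flows of smooth vector fields and composition of smooth maps are tame operations, and its differential near $0$ is a family of tame linear isomorphisms by the previous paragraph. The Nash--Moser inverse function theorem therefore applies and shows that $F$ is a local diffeomorphism at $0$; in particular its image contains a $C^{\infty}$-neighbourhood $\mathcal{V}$ of $\id|_V$ in $\mathcal{F}$, every element $f$ of which equals $F(\mathbf{a})$ for a unique small $\mathbf{a}$ with $a_i(0)=0$. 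Extending the $a_i$ to $\R^n$ as above yields the assertion.

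One can also argue more concretely, at the cost of a delicate endgame: for each fixed $x$ near $0$, the ordinary implicit function theorem with parameters solves $\phi^{X_1}_{t_1}\circ\cdots\circ\phi^{X_n}_{t_n}(x)=f(x)$ by a unique small $t=(s_1(x),\dots,s_n(x))$ depending smoothly on $x$ and vanishing identically when $f=\id$, again because the $t$-differential at $t=0$ is $M(x)$; one then builds up the product $\phi_1^{a_1X_1}\circ\cdots\circ\phi_1^{a_nX_n}$ starting from the innermost factor, at each stage realising the pointwise displacement along the orbits of $X_i$ encoded by $s_i$ as the time-one flow of $a_iX_i$ for a suitable $a_i$ fixing the origin. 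The subtle point in this route is carrying out this last reparametrisation near the common fixed point, which is precisely where the Nash--Moser formulation is more transparent.
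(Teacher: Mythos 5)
Note first that the paper does not reprove this proposition but cites it from \cite{Agrachev09}; the natural object of comparison is the proof of Proposition~\ref{prop:Rn}, which the paper states is modelled on Agrachev--Caponigro's Proposition~4.1. That argument is entirely elementary and avoids all Fréchet-space machinery. One first solves, for each parameter $p$, the finite-dimensional equation $\phi^{X_1}_{\tau_1(p)}\circ\cdots\circ\phi^{X_n}_{\tau_n(p)}(p)=f(p)$ for smooth time functions $\tau_i$ via the ordinary implicit function theorem (precisely what you do in your final paragraph). The pointwise identity is then promoted to an actual composition of diffeomorphisms by observing that the time used at stage $j$ must be evaluated not at the starting point $p$ but at the partial composite $\Phi_{j+1}(p)=\phi^{X_{j+1}}_{\tau_{j+1}(p)}\circ\cdots\circ\phi^{X_n}_{\tau_n(p)}(p)$; one therefore sets $a_j=\tau_j\circ\Phi_{j+1}^{-1}$. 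The only nontrivial point --- that the partial composites $\Phi_j$ are invertible --- is dispatched by the $C^1$-openness of the set of diffeomorphisms once $f$ is $C^1$-close to $\id$. In other words, the ``subtle reparametrisation near the common fixed point'' that you flag and leave open is resolved by inverting partial composites, not by an infinite-dimensional inverse function theorem; your ``more concrete'' second route is essentially the paper's, minus exactly this step.

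Your primary route via Nash--Moser is a genuinely different approach, but as written it has a gap at the crucial point. Hamilton's inverse function theorem requires the family of inverses $\mathbf{a}\mapsto DF(\mathbf{a})^{-1}$ to be a smooth tame map on a whole neighbourhood of $0$, not merely that $DF|_0$ is a tame isomorphism; you assert this ``by perturbation,'' but differentiating a time-one flow with respect to its generating vector field yields a nonlocal Duhamel-type integral operator, and establishing tame invertibility of $DF|_{\mathbf{a}}$ uniformly for $\mathbf{a}$ near $0$, with the required loss-of-derivatives estimates, is a substantive verification your sketch does not supply. (There is also an unaddressed domain issue: for nonzero $\mathbf{a}$ the flows may leave the compact $V$ before time one, so $F$ is not obviously a map between fixed Fréchet spaces without further care.) The Nash--Moser argument is therefore incomplete as stated, and given that the elementary route closes the gap directly, it is also unnecessary.
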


\section{Proofs}

\subsection{Diffeomorphisms of Darboux-balls}

A simpler version of the above Chow-Rashevsky Theorem which is sufficient for our purposes can be proved more explicitly in the following case.
For $n\geq 1$ consider $\R^{2n+1}$ with its standard contact structure $\xi_0$ given by the kernel of the $1$-form
$$\alpha_0=dz+\sum\limits_{i=1}^nx_idy_i.$$
The proof of the following theorem uses a variation of Proposition \ref{prop:Rn:AC} for the case of the standard contact structure.
The main difference here is that for the standard contact structure the above vector fields used in Proposition \ref{prop:Rn:AC} can be chosen linearly independent on all of $\R^{2n+1}$.
This allows to extend the result to non-compactly supported diffeomorphisms.

\begin{prop}\label{prop:Rn}
For any neighbourhood $\mathcal{U}$ of $\mathrm{id}$ in $\mathcal{D}(\R^{2n+1})$ there exists a neighbourhood $\mathcal{V}$ of $\mathrm{id}$ such that any $f\in \mathcal{V}$ is connected to $\mathrm{id}$ by a null path contained in $\mathcal{U}$.
\end{prop}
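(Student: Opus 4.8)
The plan is to exhibit an explicit frame of contact vector fields on $\R^{2n+1}$ that are pointwise linearly independent everywhere, and then bootstrap Proposition~\ref{prop:Rn:AC} into a statement about global (possibly non-compactly supported) diffeomorphisms close to $\id$. For the standard contact form $\alpha_0 = dz + \sum_{i=1}^n x_i\, dy_i$, the contact distribution $\xi_0$ is spanned by the $2n$ vector fields $\partial_{x_i}$ and $Y_i := \partial_{y_i} - x_i\,\partial_z$ for $i=1,\dots,n$; these together with their brackets $[\partial_{x_i}, Y_i] = -\partial_z$ span $T_p\R^{2n+1}$ at every point $p$. Crucially, unlike the situation in Proposition~\ref{prop:Rn:AC} where one only gets a spanning set at a single point, here the family $\partial_{x_1},\dots,\partial_{x_n},Y_1,\dots,Y_n$ is a globally linearly independent set of sections of $\xi_0$, and adjoining any one of the $\partial_z$'s obtained as a bracket gives a global frame of $T\R^{2n+1}$. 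First I would fix such a global frame $Z_1,\dots,Z_{2n+1}$ (with each $Z_j \in \Xi_{\id}$, i.e. tangent to $\xi_0$) and record that the time-one flows $\phi_1^{aZ_j}$ of $a Z_j$, for $a\colon \R^{2n+1}\to\R$ smooth, are all reached by null paths (the generating vector field $s\mapsto aZ_j$ composed appropriately stays tangent to $\xi_0$, since $Z_j\in\xi_0$ and scaling by a function preserves tangency).

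Next I would run the Chow-type decomposition. Applying Proposition~\ref{prop:Rn:AC} to the $2n+1$ vector fields of the global frame at the origin produces a compact neighbourhood $V$ of $0$ and a $C^\infty$-neighbourhood in which every germ factors as $\phi_1^{a_1 Z_1}\circ\cdots\circ\phi_1^{a_{2n+1}Z_{2n+1}}|_V$. The key upgrade for the non-compact setting is that here the $a_i$ can be taken globally defined and, more importantly, \emph{controlled}: if $f$ is $C^\infty$-close to $\id$ then the functions $a_i$ are $C^\infty$-small (this is essentially the continuity of the construction in \cite{Agrachev09}, or can be re-derived by the implicit-function-type argument since the frame is nondegenerate everywhere). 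Then I would cover $\R^{2n+1}$ by translates of $V$ — actually, since the standard contact structure is homogeneous, one can use the contact dilations and translations to move the local statement around, but a cleaner route is a partition-of-unity / fragmentation argument adapted to the Whitney topology: write a diffeomorphism $f$ near $\id$ as a (locally finite) composition of diffeomorphisms each supported in a translate of $V$, each in turn $C^\infty$-close to $\id$, hence each decomposable into flows of functions times frame fields by the local statement. Concatenating all the resulting null paths (in a locally finite way so the composite path is still smooth and lands in the prescribed neighbourhood $\mathcal U$) yields the desired null path from $f$ to $\id$.

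The main obstacle I anticipate is the topological bookkeeping in the Whitney $C^\infty$-topology rather than any geometric difficulty: one must ensure that (i) the fragmentation of $f$ near $\id$ into pieces supported in charts is itself continuous/controlled in the Whitney topology — this is delicate precisely because, as the paper notes in Section~\ref{sec:top}, the Whitney topology is not locally path-connected and the Fragmentation Lemma is not available off the shelf — and (ii) the concatenation of infinitely many null paths (one per chart in a locally finite cover) is genuinely a smooth path $[0,1]\to\diff(\R^{2n+1})$ staying inside $\mathcal U$. The way to handle (i) is to exploit the very explicit global frame: rather than fragmenting abstractly, choose a locally finite cover $\{V_k\}$ by translates of $V$ with subordinate cutoffs $\rho_k$, and directly write the generating vector field of a straight-line-type path from $\id$ to $f$ (in suitable coordinates) as $\sum_k \rho_k(\cdots)$, then project/correct each summand onto $\xi_0$ using the frame; smallness of $f$ makes all corrections small and the sum locally finite. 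For (ii) one reparametrizes the $k$-th null path to occupy a time subinterval shrinking fast enough (and uses local finiteness of the supports) so the concatenation is smooth and, by choosing $\mathcal V$ small enough at the outset, remains in $\mathcal U$ throughout. I would isolate (i)–(ii) as the technical heart and treat the algebraic flow-decomposition as essentially a citation to Proposition~\ref{prop:Rn:AC} together with the explicit frame.
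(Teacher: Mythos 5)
Your proposal has a genuine gap at its foundational step. You assert you will "fix such a global frame $Z_1,\dots,Z_{2n+1}$ (with each $Z_j\in\Xi_{\id}$, i.e.\ tangent to $\xi_0$)." That is impossible: $\xi_0$ has rank $2n$, so at most $2n$ of the $Z_j$ can lie in $\Xi_{\id}$ pointwise, and in fact your own preceding sentence already adjoins $\partial_z$, which is everywhere transverse to $\xi_0$. The consequence is fatal to the very next step, where you claim each $\phi_1^{a Z_j}$ is reached by a null path because "$Z_j\in\xi_0$ and scaling by a function preserves tangency" --- this is simply false for the transverse frame field, and the straight-line path $s\mapsto\phi_1^{saZ_j}$ has a generating vector field that is not tangent to $\xi_0$. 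The whole point of the decomposition is to produce factors that \emph{can} be reached by null paths, and that is exactly where the work lies.

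The paper solves this by a more careful choice of the third field: rather than $\partial_z$ (or any section of $\xi_0$, which would leave the triple degenerate), it takes $Z := (\phi_\epsilon^X)_* Y$ with $X=\partial_x$, $Y=\partial_y - x\partial_z$. The triple $X,Y,Z$ is pointwise independent everywhere, and although $Z\notin\Xi_{\id}$, its flow factors as $\phi_t^Z = \phi_\epsilon^X\circ\phi_t^Y\circ\phi_{-\epsilon}^X$ --- all three pieces are flows of (reparametrizations of) fields tangent to $\xi_0$, hence reachable by null paths. This conjugation trick is the missing idea in your argument. Working out the explicit time functions $\tau_1,\tau_2,\tau_3$ for $X,Y,Z$ and inverting the intermediate maps $\Phi_1,\Phi_2$ for $f$ close enough to $\id$ then gives a single \emph{global} factorization of $f$ into five null-path-reachable pieces (equation \eqref{eq1} in the paper). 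No covering of $\R^{2n+1}$ by translates, no fragmentation, no infinite concatenation.

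This also means your second half is aimed at the wrong target. You propose a locally finite fragmentation of $f$ into diffeomorphisms supported in translates of a small ball, with a locally finite concatenation of null paths; but Section~\ref{sec:top} of the paper explicitly points out that the Fragmentation Lemma is not available in the Whitney topology on a non-compact manifold --- this is stated as the very reason the paper restricts to $\R^{2n+1}$, where a global decomposition sidesteps fragmentation entirely. You correctly identify the Whitney-topology bookkeeping as a danger point, but the remedy is not to do the bookkeeping more carefully; it is to avoid fragmentation altogether by exploiting that on $\R^{2n+1}$ the frame $X,Y,Z$ and its flows are global, explicit, and admit the estimates needed to stay inside a prescribed neighbourhood $\mathcal U$.
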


\begin{proof}

As the computations work similarly in any dimension, we assume for simplicity that $n=1$.

The tangent space at any point in $\R^3$ is spanned by the vector fields
\begin{align*}
X&:= \partial_x\\
Y&:= \partial_y-x\partial_z\\
Z&=Z_\epsilon:=\partial_y+(x-\epsilon)\partial_z,
\end{align*}
where $\epsilon>0$ is fixed.
The vector fields are chosen such that $X$ and $Y$ span the contact distribution $\xi_0$ and $Z=(\phi_\epsilon^X)_{\ast}Y$, i.e., $\phi_t^Z=\phi_{\epsilon}^X\phi_t^Y(\phi_{\epsilon}^X)^{-1}$.
Their flows are given by
\begin{align*}
\phi_t^X(x,y,z)&=(x+t,y,z)\\
\phi_t^Y(x,y,z)&=(x,y+t,z-xt) \\
\phi_t^Z(x,y,z)&=(x,y+t,z+(\epsilon-x)t).
\end{align*}
Let $f=(f_x,f_y,f_z)\in\D$.
In a first step we want to find functions $\tau_i\colon \R^3\rightarrow \R$, depending on $f$ and $\epsilon$, such that
$$f(x,y,z)=\phi_{\tau_1(x,y,z)}^X\left(\phi_{\tau_2(x,y,z)}^Y\left(\phi_{\tau_3(x,y,z)}^Z(x,y,z)\right)\right).$$
A straightforward computation shows that 
\begin{align*}
\tau_1(x,y,z)&=f_x-x\\
\tau_2(x,y,z)&=f_y-y-\frac{f_z-z+x(f_y-y)}{\epsilon} \\
\tau_3(x,y,z)&=\frac{f_z-z+x(f_y-y)}{\epsilon}.
\end{align*}

Note that $f$ is not the composition of the functions $\phi_{\tau_1}^X, \phi_{\tau_2}^Y,\phi_{\tau_3}^Z$, which are not diffeomorphisms in general.
To write $f$ as a composition of (re-parametrized) flows of $X,Y,Z$, we need to be able to invert those maps.

Following the proof of Proposition 4.1 in \cite{Agrachev09} define 
\begin{align*}
\Phi_1(x,y,z)&:=\phi_{\tau_3(x,y,z)}^Z(x,y,z)=(x,y+\tau_3, z+(\epsilon-x)\tau_3)\\
\Phi_2(x,y,z)&:=\phi_{\tau_2(x,y,z)}^Y\left(\phi_{\tau_3(x,y,z)}^Z(x,y,z)\right)=(x,f_y,f_z).
\end{align*}

\textbf{Claim:} For fixed $\epsilon>0$ there exists a neighbourhood $\mathcal{W}_{\epsilon}$ such that the maps $\Phi_1$ and $\Phi_2$ are invertible for all $f\in\mathcal{W}_{\epsilon}$.

The group of diffeomorphisms is $C^1$-open inside the space of all smooth maps, see \cite[Chapter 2, Theorem 1.6]{Hirsch76}.
In particular there exist a locally finite family of compact subsets $(K_i)_{i\in A}$ and a family of positive numbers $(\delta_i)_{i\in A}$ such that if a smooth map $g$ satisfies $|g(p)-p|<\delta_i$ and $|Dg(p)-\mathrm{Id}_{3}|<\delta_i$ for all $i\in A$ and $p\in K_i$ then $g$ is a diffeomorphism.

Indeed one can compute that on some compact subset $K_i$
\begin{align*}
&|\Phi_1(x,y,z)-(x,y,z)|=\left\vert\left(0,\frac{f_z-z+x(f_y-y)}{\epsilon},(\epsilon-x)\frac{f_z-z+x(f_y-y)}{\epsilon}\right)\right\vert\\
&\leq \frac{1}{\epsilon}(\left\vert(0,f_z-z,(\epsilon-x)(f_z-z))\right\vert+|x||(0,f_y-y,(\epsilon-x)(f_y-y))|)\\
&\leq C_1\max\limits_{p\in K_i}|f(p)-p|,
\end{align*}
where $C_1(K_i, \epsilon)$ is a constant depending on $K_i$ and $\epsilon$.

Similarly one shows that 
$$|D\Phi_1(p)-\mathrm{Id}_{3}|\leq C_2\max\limits_{p\in K_i}|Df(p)-\mathrm{Id}_{3}|$$ 
for some constant $C_2(K_i,\epsilon)$.
Hence choosing $f$ close enough to the identity we have that $|\Phi_1(p)-p|<\delta_i$ and $|D\Phi_1(p)-\mathrm{Id}_{3}|<\delta_i$ for all $i\in A$ and $p\in K_i$.
Obviously the same is true for $\Phi_2$, which proves the claim.

It follows that 
\begin{align}\label{eq1}
f=\phi^X_{\tau_1\circ\Phi_2^{-1}}\circ\phi^Y_{\tau_2\circ \Phi_1^{-1}}\circ\phi^Z_{\tau_3}=\phi^X_{\tau_1\circ\Phi_2^{-1}}\circ\phi^Y_{\tau_2\circ \Phi_1^{-1}}\circ\phi^X_{\epsilon}\circ\phi^Y_{\tau_3\circ \phi_{\epsilon}^X}\circ(\phi^X_{\epsilon})^{-1}
\end{align}
for $f\in \mathcal{W}_{\epsilon}$.

Let $\mathcal{U}$ be any open neighbourhood of $\mathrm{id}$ in the space of smooth maps.
A similar straightforward estimate using the above explicit expressions shows that one can pick the neighbourhood $\mathcal{W}_{\epsilon}$ of $\mathrm{id}$ in $\D$ such that the maps $\phi^X_{s(\tau_1\circ\Phi_2^{-1})}$, $\phi^Y_{s(\tau_2\circ \Phi_1^{-1})}$ and $\phi^Y_{s(\tau_3\circ \phi_{\epsilon}^X)}$ are well-defined diffeomorphisms and lie in $\mathcal{U}$ for every fixed $f\in \mathcal{W}_{\epsilon}$ and $s\in[0,1]$.
Moreover for $\epsilon$ sufficiently small the paths $(\phi_{\pm s\epsilon}^X)_{s\in[0,1]}$ are contained in $\mathcal{U}$.

The paths $\phi^X_{s(\tau_1\circ\Phi_2^{-1})}$, $\phi^Y_{s(\tau_2\circ \Phi_1^{-1})}$ and $\phi^Y_{s(\tau_3\circ \phi_{\epsilon}^X)}$ are generated by time-dependent vector fields tangent to $\xi_0$.
Their concatenations together with concatenating $(\phi_{\pm s\epsilon}^X)_{s\in[0,1]}$ defines a null path from $\mathrm{id}$ to $f\in\mathcal{W}_{\epsilon}$. 
After possibly diminishing $\mathcal{V}:=\mathcal{W}_{\epsilon}$, we can assume that this path is contained in $\mathcal{U}$ for all $f\in\mathcal{V}$.

\end{proof}

\subsection{Proof of Theorem \ref{thm:Rn:all}}

Since $\mathcal{D}(\R^{2n+1})$ equipped with the Whitney $C^{\infty}$-topology is a topological group, see e.g. \cite{Hirsch76}, the group is generated by any open neighbourhood of the identity.
Proposition \ref{prop:Rn} implies the existence of a neighbourhood $\mathcal{U}$ around the identity such that any $g\in\mathcal{U}$ is connected to $\id$ via a null path of diffeomorphisms.

Every diffeomorphisms is a finite composition $f=f_1\circ\cdots\circ f_k$, where $f_i\in\mathcal{U}$.
By concatenations and right translation of the paths connecting the identity to the $f_i$ we obtain a null path from $\id$ to $f$.

Let $\phi_t$ be the Reeb-flow of some contact form $\alpha$, i.e., $\phi_t$ is the flow of a vector field $R$ uniquely defined by the equations $\alpha(R)=1$ and $d\alpha(R,\cdot)=0$.
Then $\phi_t$ is a positive path of diffeomorphisms preserving the contact form $\alpha$.
Pick a null path of diffeomorphisms $(g_t)_{t\in[0,1]}$ with $g_0=\mathrm{id}$ and $g_1=\phi_{-1}f$.
The composition $(\phi_tg_t)_{t\in[0,1]}$ is a positive path of diffeomorphisms from $\mathrm{id}$ to $f$.

Every $f\in \D_c$ is the composition of finitely many elements $f_1,\cdots, f_k$ in $\D_c\cap\mathcal{U}$.
By construction of the null paths in Proposition \ref{prop:Rn}, every $f_i$ is connected to $\id$ by a compactly supported null path.
Hence the same is true for $f$.

Consider the standard Reeb flow on $\R^{2n+1}$, which in dimension $3$ is given by $\phi_t(x,y,z)=(x,y,z+t)$.
The null paths to $\phi_t$ for fixed $t$ obtained in the proof of Proposition \ref{prop:Rn} can be computed explicitly as
\begin{align}
\phi^X_{s(\tau_1\circ\Phi_2^{-1})}&=\id\\
\phi^Y_{s(\tau_2\circ\Phi_1^{-1})}&=\left(x,y-\frac{st}{\epsilon},z-(\epsilon-x)\frac{st}{\epsilon}\right)\\
\phi^Y_{s(\tau_3\circ\phi_{\epsilon}^{X})}&=\left(x,y+\frac{st}{\epsilon},z+(\epsilon-x)\frac{st}{\epsilon}\right).
\end{align}\label{eq2}
The higher dimensional case works analogously.

Let $\mathcal{U}_{co}$ be a neighbourhood of the identity in the $C^{\infty}$-compact open topology.
For $|t|$ sufficiently small the Reeb flow $\phi_t$ at time $t$ lies in $\mathcal{U}_{co}$ and from the above expressions one sees that, picking $|t|$ small enough, $\phi_t$ is connected to $\id$ by a null path contained in $\mathcal{U}_{co}$.
Given a compactly supported path $(f_s)_{s\in[0,1]}$ and fixed $t$ small enough one gets a null path $g_s$ connecting $\id$ and $\phi_{-t}\circ f_1$ that is arbitrarily close in the compact open topology to the identity outside of some compact subset.
As before we get the desired positive path by looking at the composition $\phi_{st}\circ g_s$.

\subsection{Proof of Theorem \ref{thm:extension}}

Let $(f_t)_{t\in [0,1]}$ be a path of contactomorphisms that is positive outside a compact set $K_0$.
We can without restriction assume that $f_0=\id$.
Pick a compact set $K_1$ such that $f^{-1}_t(K_0)\subset K_1$ for all $t\in [0,1]$.
Given a contact form $\alpha$, let $C_1$ be the minimum of the time-dependent contact Hamiltonian inducing the path $(f_t)_{t\in [0,1]}$.
Note that $f_t^{\ast}\alpha=\rho_t\alpha$ for some family of positive functions $(\rho_t)_{t\in[0,1]}$.
Let $C_2$ be the maximum of the family $(\rho_t)_{t\in[0,1]}$ on $K_1$.
Pick compact subsets $K_2, K_3$ with $K_1\subset K_2\subset K_3$ and a non-negative function $h$ with $h= -C_1C_2+1$ on $K_2$ and $h=0$ on $\R^{2n+1}\setminus K_3$.
Let $(\phi_t)_{t\in [0,1]}$ be the contact flow generated by the contact Hamiltonian $h$.
Then by Theorem \ref{thm:Rn:all} there exists a compactly supported null path $(\psi_t)_{t\in [0,1]}$ connecting the identity and $\phi_{-1}$.
We pick the sets $K_2$ and $K_3$ such that $\phi_t(K_1)\subset K_2$ for all $t\in [-1,1]$.
Hence on $K_2$ the flow $(\phi_t)_{t\in [0,1]}$ coincides with a reparametrization of the standard Reeb flow.
Moreover, from the expressions (2)-(4) for the null path in this case, one sees that if we pick $K_2$ and $K_3$ large enough we can assume that $\phi_t(\psi_t(K_1))\subset K_2$ for all $t\in [0,1]$.
Then $(g_t)_{t\in [0,1]}$ with $g_t=\phi_t\circ\psi_t$ is a positive loop based at $\id$ with $\alpha\left(X_t^g\right)>-C_1C_2$ on $K_1$.

It follows that $(f_t\circ g_t)_{t\in [0,1]}$ is a positive path of diffeomorphisms between $\id$ and $f_1$ that coincides with $f_t$ outside a compact subset for $t\in [0,1]$.
 
\subsection{Proof of Theorem \ref{thm:path}}

Let $f\in\D_c(M)$ be compactly supported in some compact set $K$ let $U_1,\cdots, U_m$ be a finite cover of $M$ by Darboux charts.
The fragmentation Lemma \cite{Banyaga13} implies that there exist $f_1,\cdots, f_j$ such that $f=f_1\circ\cdots\circ f_j$ and $f_i\in\D_c(U_{k(i)})$.
It follows from Theorem \ref{thm:Rn:all} that there exist null paths from $\id$ to $f_i$.
Note that the paths constructed in the proof of Theorem \ref{thm:Rn:all} are compactly supported in $U_{k(i)}$ if $f_i$ is compactly supported.
By concatenating and right translating these paths we obtain a compactly supported null path from the identity to $f$.

Let $\psi_1,\cdots, \psi_m$ a partition of unity subordinate to the cover $U_1,\cdots, U_m$.
Given a contact form $\alpha$ consider the flow $\phi_t^{i}$ induced by the contact Hamiltonian $\psi_i$, i.e., $\phi_t^{i}$ is the flow of the unique contact vector field $X$ with $\alpha(X)=\psi_i$, see \cite{Geiges} for details on contact Hamiltonians.
The composition $\phi_t:=\phi_t^1\circ\cdots\circ\phi_t^m$ is a positive path of contactomorphisms.
Moreover, for fixed $t$ the diffeomorphism $\phi_t$ is connected to $\id$ by a null path since by Theorem \ref{thm:Rn:all} each of the $\phi_t^i$ is.
Hence given a compactly supported path $(f_t)_{t\in[0,1]}$, the diffeomorphism $\phi_{-1}f_1$ is connected to the identity by a null path $(g_t)_{t\in [0,1]}$.
The desired positive path from $\id$ to $f_1$ is given by $(\phi_t\circ g_t)_{t\in [0,1]}$.
 
\subsection{Proof of Theorem \ref{thm:homotopic}}

Let $(f_t)_{t\in[0,1]}$ be a compactly supported path of diffeomorphisms. 
In the limit topology chosen on $\D_c$, see \ref{sec:top}, $(f_t)_{t\in[0,1]}$ defines a continuous path.
As this topology is locally contractible, we can pick a finite cover $\mathcal{U}_1,\cdots, \mathcal{U}_m$ of $(f_t)_{t\in[0,1]}$ by contractible open subsets.
Pick $t_0,\cdots, t_l\in [0,1]$ such that $t_0=0, t_l=1$ and $t_i$ and $t_{i+1}$ are contained in the same contractible open subset $\mathcal{U}_{k(i)}$.
We will first show that $f_{t_i}$ and $f_{t_{i+1}}$ can be connected in $\mathcal{U}_{k(i)}$ by a null path of diffeomorphisms.
Concatenating these paths we obtain a null path that is homotopic to $(f_t)_{t\in[0,1]}$.

\begin{lemma}
Let $(g_t)_{t\in[0,1]}$ be a path of diffeomorphisms contained in an open subset $\mathcal{U}\subset \D_c$.
Then there exists a null path of diffeomorphisms contained in $\mathcal{U}$ connecting $g_0$ and $g_1$.
\end{lemma}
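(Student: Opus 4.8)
The plan is to reduce the statement to the local Chow-Rashevsky result Proposition \ref{prop:Rn}, transporting small null paths along $(g_t)$ by right translations. Two preliminary observations drive the argument. First, for fixed $g\in\D_c$ the right translation $R_g\colon\D_c\to\D_c$, $h\mapsto h\circ g$, is a homeomorphism and preserves the null condition: a path $(h_t)$ and its right translate $(h_t\circ g)$ generate the \emph{same} time-dependent vector field $X_t$, so $\alpha(X_t)\equiv0$ is unaffected. Consequently a null path from $\id$ to $k$, right translated by $g$, is a null path from $g$ to $k\circ g$, and it lies in $\mathcal{U}$ exactly when the original lies in the open neighbourhood $R_g^{-1}(\mathcal{U})$ of $\id$. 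Second, a compactly supported path is continuous in the direct limit topology on $\D_c$ and has compact image, hence is contained in a single $\D_c(K)$ with $K$ compact; covering $K$ by finitely many Darboux charts $U_1,\dots,U_m$ brings us into a setting where Proposition \ref{prop:Rn} can be applied chart by chart.

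The first real step is a neighbourhood version of Proposition \ref{prop:Rn} adapted to $\D_c$: for every open neighbourhood $\mathcal{W}$ of $\id$ in $\D_c$ there is an open neighbourhood $\mathcal{V}$ of $\id$ such that every $h\in\mathcal{V}$ is joined to $\id$ by a null path contained in $\mathcal{W}$. I would deduce this from the fragmentation Lemma \cite{Banyaga13} in the refined form in which the fragments depend continuously on $h$ near the identity: write $h=h_1\circ\cdots\circ h_j$ with each $h_i$ supported in some $U_{k(i)}$ and with $h_i\to\id$ as $h\to\id$. Identifying $U_{k(i)}$ with an open subset of $(\R^{2n+1},\xi_0)$, Proposition \ref{prop:Rn} supplies a null path $\gamma_i$ from $\id$ to $h_i$, supported in $U_{k(i)}$ and as small as desired in the $C^{\infty}$ compact-open topology. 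Concatenating $\gamma_j$, then $\gamma_{j-1}$ right translated by $h_j$, then $\gamma_{j-2}$ right translated by $h_{j-1}\circ h_j$, and so on, produces a null path from $\id$ to $h$ whose size is controlled by that of $h$; choosing $\mathcal{V}$ small enough forces it into $\mathcal{W}$.

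Granting this, the lemma follows by a compactness argument. For each $t\in[0,1]$ the set $R_{g_t}^{-1}(\mathcal{U})$ is an open neighbourhood of $\id$, since $g_t\in\mathcal{U}$; let $\mathcal{V}_t$ be the associated neighbourhood from the previous step. Continuity of multiplication and inversion in $\D_c$ and of $s\mapsto g_s$ makes $s\mapsto g_s\circ g_t^{-1}$ continuous with value $\id$ at $s=t$, so $g_s\circ g_t^{-1}\in\mathcal{V}_t$ for $s$ in an open interval about $t$. Passing to a finite subcover, pick $0=t_0<t_1<\cdots<t_l=1$ with each consecutive pair in one such interval; then $g_{t_{i+1}}\circ g_{t_i}^{-1}\in\mathcal{V}_{t_i}$ is joined to $\id$ by a null path inside $R_{g_{t_i}}^{-1}(\mathcal{U})$, and right translating by $g_{t_i}$ gives a null path from $g_{t_i}$ to $g_{t_{i+1}}$ inside $\mathcal{U}$. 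Concatenating over $i$ yields the required null path from $g_0$ to $g_1$ in $\mathcal{U}$.

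I expect the main obstacle to be the continuity-refined fragmentation step, together with its support bookkeeping: one needs the fragments $h_i$ to have supports in fixed Darboux charts \emph{and} to converge to $\id$ with $h$, so that the concatenation of the chartwise null paths of Proposition \ref{prop:Rn} (and its right translates, whose supports must be tracked inside a fixed compact set) is genuinely small in the direct limit topology of $\D_c$. Everything else — right invariance of the null condition and the finite subcover of $[0,1]$ — is formal.
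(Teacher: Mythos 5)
Your proof is correct and takes essentially the same approach as the paper's: both reduce the lemma to Proposition \ref{prop:Rn} via the fragmentation lemma, and glue the resulting small null paths along a subdivision of $[0,1]$ using right translation (which, as you note, preserves the generating vector field and hence the null condition). The only difference is in packaging — the paper fragments the path $(g_t)$ first and then subdivides inside each fragment, whereas you first establish a local ``null-connectivity near $\id$'' lemma by fragmentation and then subdivide the global path — but these are the same argument read in a different order.
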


\begin{proof}
It is no restriction to assume that $f_0=\id$.
Let $K$ be a compact subset such that $(g_t)_{t\in[0,1]}$ is supported in $K$ and let $U_1,\cdots, U_k$ be an open cover of $K$ by Darboux charts.
The fragmentation Lemma \cite{Banyaga13} implies that $g_t=g_t^1\circ\cdots\circ g_t^l$, where $g_t^i$ is contained in $\D_c(U_{l(i)})\cap\mathcal{U}$.

By Proposition \ref{prop:Rn} for every fixed neighbourhood $\mathcal{W}_i\subset \D_c(U_{l(i)})\cap\mathcal{U}$ and each $t\in [0,1]$ there exists a neighbourhood $\mathcal{V}_t$ of $g_t$ in $\D_c(U_{l(i)})$ such that each $h\in \mathcal{V}_t$ is connected to $g_t$ by a null path contained in $\mathcal{W}_i$.
Pick a subdivision $0=t_0\leq \cdots \leq t_m=1$ such that $g_{t_{j+1}}\in \mathcal{V}_{t_j}$ for all $0\leq j\leq m$.
Then $g_{t_j}$ and $g_{t_{j+1}}$ are connected by a null path contained in $\mathcal{W}_i$.
The concatenation of these paths is a null path between $\id$ and $g_t^i$ in $\mathcal{W}_i$.

Choosing each $\mathcal{W}_i$ small enough and concatenating and right translating each of these null paths, we obtain a null path connecting $\id$ and $g_1$ contained in $\mathcal{U}$.

\end{proof}

The Lemma also implies the existence of small null and positive loops.

In case $M$ is closed, all the topologies described above coincide, in particular the Whitney $C^{\infty}$-topology is locally contractible on $\D(M)$.
As before let $\phi_t$ be the Reeb flow of some contact form.
Left composing each of the null paths obtained above with $(\phi_{\epsilon t})_{t\in[0,1]}$ we obtain positive paths between $f_{t_i}$ and $f_{t_{i+1}}$ contained in $\mathcal{U}_{k(i)}$.
Their concatenation is a positive path homotopic to $(f_t)_{t\in[0,1]}$.

\subsection{Proof of Theorem \ref{cor:positive:contraction}} 

Let $U_1\cdots U_k$ be an open cover of $M$ by Darboux charts and $\psi_1,\cdots, \psi_k$ a subordinate partition of unity.
Given a contact form $\alpha$ and $\delta>0$ consider the flow $\phi_t^{i,\delta}$ induced by the contact Hamiltonian $\delta\psi_i$, i.e., $\phi_t^{i,\delta}$ is the flow of the unique contact vector field $X$ with $\alpha(X)=\delta\psi_i$.
The map $\phi_t^{i,\epsilon}$ is compactly supported in $U_i$.
Moreover, the composition $\phi_t^{\delta}:=\phi_t^{1,\delta}\circ\cdots\circ \phi_t^{k,\delta}$ defines a positive path of contactomorphisms.

For $\delta$ sufficiently small, all $\phi_{-s}^{i,\delta}$ are connected to $\id$ by a null path $(f_t^{s,i})_{t\in[0,1]}$ of the form (\ref{eq1}) for each $s\in[0,1]$.
Their concatenation and right translation yields a null path $(f_t^s)_{t\in[0,1]}$.
From the construction of the $f_t^{s,i}$ one can see that $(f_t^s)_{t\in[0,1]}$ depends continuously on $s$.
Then $\phi_{st}^{\delta}\circ f_t^s$ is a homotopy of positive loops.

\end{document}